\providecommand{\tabularnewline}{\\}
\theoremstyle{plain}
\newtheorem{thm}{\protect\theoremname}
\theoremstyle{remark}
\newtheorem{remark}{Remark}
\providecommand{\remarkname}{Remark}
\providecommand{\theoremname}{Theorem}
\begin{document}
\begin{frontmatter}{}

\title{From networked SIS model to the Gompertz function}

\author{Ernesto Estrada}

\address{Institute for Cross-Disciplinary Physics and Complex Systems (IFISC,
UIB-CSIC),\\ Campus Universitat de les Illes Balears E-07122, Palma
de Mallorca, Spain.\\ \textbf{E-mail}: estrada@ifisc.uib-csic.es}

\author{Paolo Bartesaghi}

\address{Department of Statistics and Quantitative Methods,\\
	University of Milano - Bicocca, Via Bicocca degli Arcimboldi 8, 20126, Milano, Italy.\\
	\textbf{E-mail}: paolo.bartesaghi@unimib.it}

\begin{abstract}
\textcolor{black}{The Gompertz function is one of the most widely
used models in the description of growth processes in many different
fields. We obtain a networked version of the Gompertz function as
a worst-case scenario for the exact solution to the SIS model on networks.
This function is shown to be asymptotically equivalent to the classical
scalar Gompertz function for sufficiently large times. It proves to
be very effective both as an approximate solution of the networked
SIS equation within a wide range of the parameters involved and as
a fitting curve for the most diverse empirical data. As an instance,
we perform some computational experiments, applying this function
to the analysis of two real networks of sexual contacts. The numerical results highlight the analogies and the differences between the exact description provided by the SIS
model and the upper bound solution proposed here, observing how the
latter amplifies some empirically observed behaviors such as the
presence of multiple and successive peaks in the contagion curve. }

\textcolor{black}{\medskip{}
}

\textbf{\textcolor{black}{AMS Subject Classification}}\textcolor{black}{{:
92D39; 05C82, 37N25}}
\end{abstract}
\end{frontmatter}{}

\section{Introduction}

The Gompertz function was first introduced by Benjamin Gompertz in
1825 to describe human mortality curves \citep{gompertz1825xxiv}.
Formally, a Gompertz curve can be defined as a non-negative real valued
function defined on the open interval $0<t<\infty$ \citep{oshima2020modified}:

\begin{equation}
f\left(t\right)=P\exp\left[-Qe^{-Rt}\right],\label{Gompertz}
\end{equation}
where $P,Q,R>0$. It can be proved that it is the solution of the
differential equation of the form (see \citep{oshima2020modified}
for the formal proof):

\begin{equation}
\dot{f}\left(t\right)=\frac{df(t)}{dt}=QRe^{-Rt}f(t).
\end{equation}

A point of inflection for the Gompertz curve is the ordered pair $\left(\dfrac{\log Q}{R},\dfrac{P}{e}\right).$
This curve is appropriate to describe many physical, biological and
man-made processes, which are characterized by a rapid growth in the
early stages and slower decrease in the late ones. For instance, Finch
and Pike \citep{finch1996maximum} have examined maximum life span
predictions obtained with the Gompertz mortality rate model and observed
that in mammals and birds there is a good agreement on the maximum
life span predicted by the model and the ones reported for local populations.
In biology, the Gompertz model is frequently used to describe the
growth of bacteria and cancer cells. In the last case, many studies
are reported in the mathematical biology literature (see, for instance,
\citep{tumor_1,tumor_2,tumor_3}). The justification for using the
Gompertzian models in modeling tumor growth was provided by Frenzen
and Murray \citep{frenzen1986cell}. They proposed two related maturity-time
cell kinetics model mechanisms, which at large time evolve to a Gompertz
form. More recently, Karin et al. \citep{karin2019senescent} found
that senescent cell turnover slows with age which gives an explanation
for the Gompertz law. It is plausible that the ubiquity of the Gompertz
model is due to its ``diffusive'' nature. Gutierrez-Jaimez et al.
\citep{gutierrez2007new} have proposed a new Gompertz-type diffusion
process which allows that bounded sigmoidal growth patterns are modeled
by time-continuous variables. Another diffusion-like approach was
proposed by Li et al. \citep{li2018dynamic}.

Recently, it has been discovered that the Gompertz curve is appropriate
to describe different growing processes related to the pandemic of
COVID-19\footnote{COVID-19 is the acronym for COronaVIrus Disease 2019}.
For instance, Ramirez-Torres et al. \citep{ramirez2021new} have used
it for estimating the number of unreported cases of COVID-19 in a
region/country. Conde-Gutierrez et al. \citep{conde2021comparison}
have used the Gompertz function for predicting the dynamics of deaths
from the pandemics, while Berihuete et al. \citep{berihuete2021bayesian}
used Bayesian approaches implementing the Gompertz function to forecast
the evolution of the contagious disease and evaluate the success of
particular policies in reducing infections. Mandujano Valle \citep{valle2020predicting}
also used this function to estimate the total number of infected and
deaths by COVID-19 in Brazil and two Brazilian States (Rio de Janeiro
and S\~{a}o Paulo). In another work, Ohnichi et al. \citep{ohnishi2020universality}
demonstrated the existence of a universal scaling behavior for the
number of cases of COVID-19 in 11 countries using the Gompertz function.

The similarities and differences between the Gompertz and the logistic
model have been the topic of much debate \citep{comparison_1,comparison_2,comparison_3,comparison_4}.
In general, it has been observed that (i) the lack of symmetry of
the Gompertz curve, (ii) the fact that its point of inflection occurs
earlier than in the logistic, and (iii) that the carrying capacity
is reached relatively earlier than in the logistic curve, are advantages
in fitting growth data \citep{comparison_4}. All in all, it can be
concluded that the Gompertz curve is more useful to fit data of some
growth processes than the logistic curve. Additionally, there is evidence
on the superiority of the Gompertz vs. the logistic curve in fitting
empirical data about the disease progression of pathosystems for more
than 100 diseases in plants \citep{comparison_3}, as well as in reproducing
COVID-19 data \citep{achterberg2020comparing}.


\begin{figure}[H]
	\centering \includegraphics[width=0.45\textwidth]{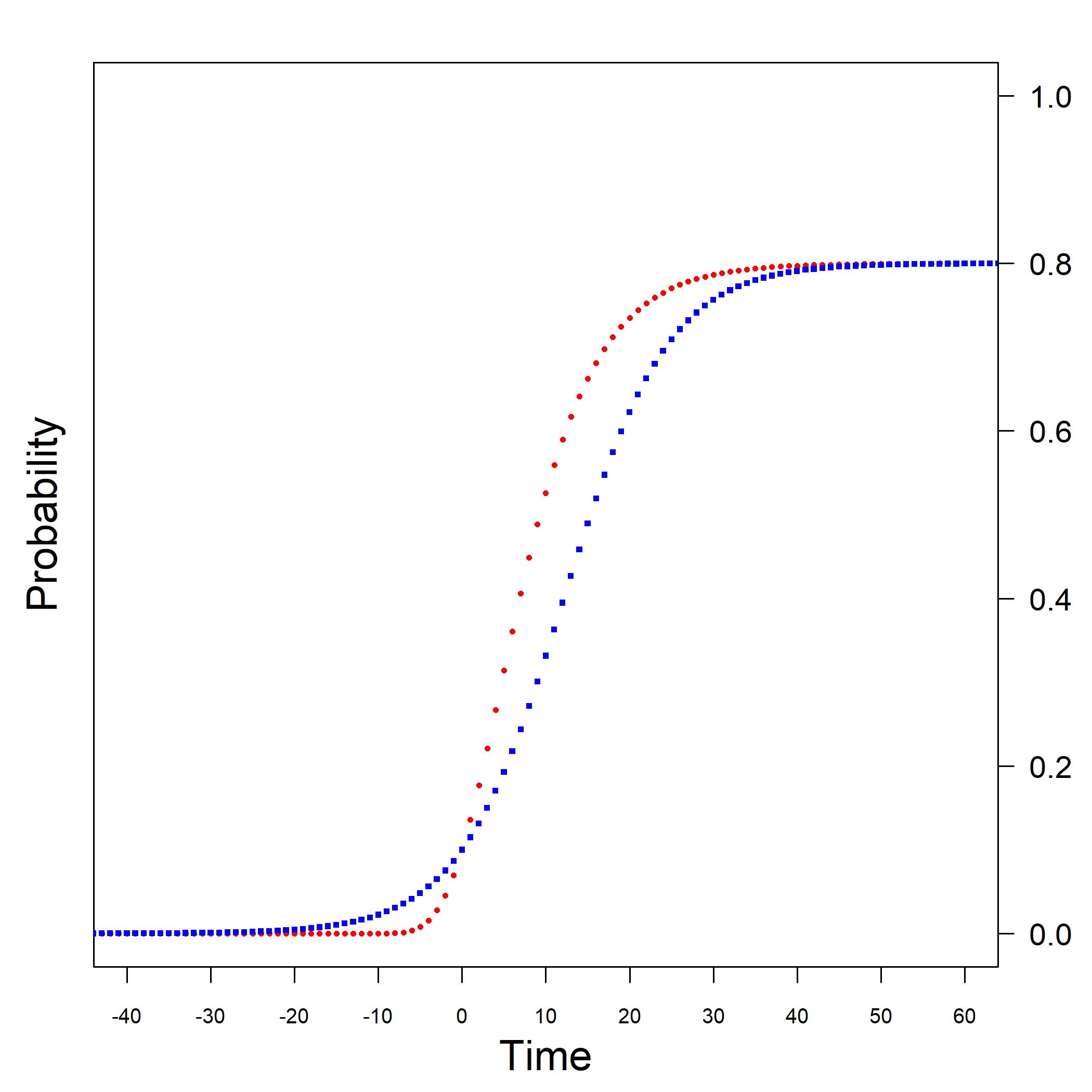}
	\includegraphics[width=0.45\textwidth]{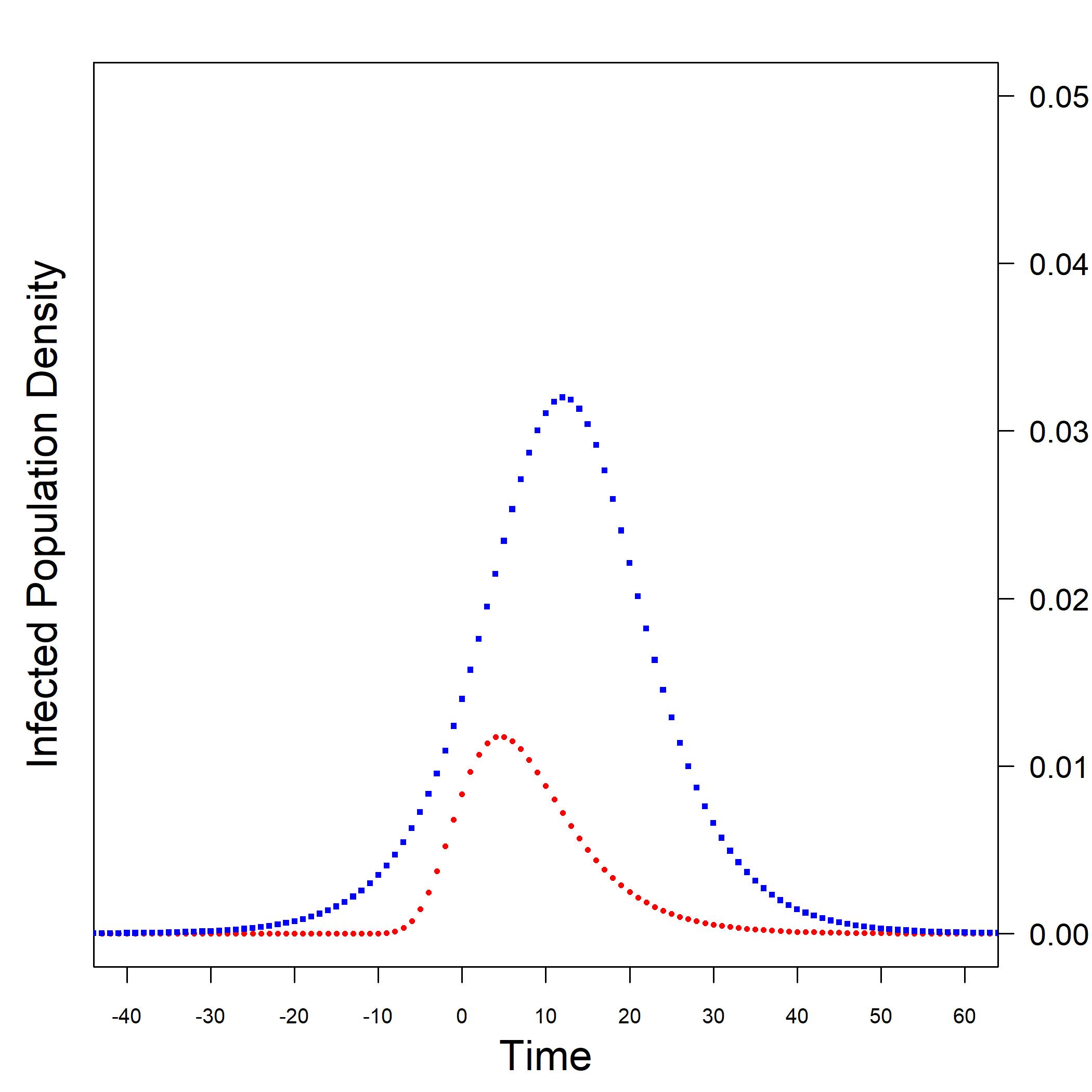}
\caption{Differences between SIS model (blue square dots plots) and Gompertz model (red circular dots plots). Left panel illustrates the cumulative distribution of the probability to find an infected individual at time $t$ (blue plot is the logistic curve and red plot is the Gompertz curve). Right panel illustrates the infected population density, i.e. the number of individuals who become infected exactly at time $t$, in both models. See the text for explanation and parameters values.}
	\label{fig0} 
\end{figure}

\textcolor{black}{The standard logistic growth curve can be interpreted
as the solution of the Susceptible-Infected-Susceptible (SIS) model,
the epidemiological model characterized by the possibility of re-infection
after recovery (see, for instance, \citep{Hui2021} for an up-to-date introduction to the SIS model). In Fig. \ref{fig0}, we illustrate the difference
between SIS model and Gompertz model. Blue square dots curves in both
boxes refer to the SIS model and red circular dots curves to the Gompertz
model. Curves in the left box represent the normalized cumulative
number of cases $N(t)$ as a function of time $t$, whereas curves
in the right box represent the first derivative $dN(t)/dt$. Curves on the left
can be interpreted as the cumulative probabilities to find an infected
individual at time $t$ and curves on the right as the probabilities
that new individuals get infected at time $t$, or equivalently as
the infected population density. All the plots refer to the same set
of parameters values. The initial fraction of infected individuals
or, more in general, the initial population size is set to $N_{0}=0.1$
and the asymptotic fraction of the infected individuals or, more in
general, the final carrying capacity is set to $N_{\infty}=0.8$.
In the SIS model, the value of the infection rate is $\beta=0.2$
and the value of the recovering rate is $\gamma=0.04$. Note that,
with these values, we have $N_{\infty}=1-\frac{\gamma}{\beta}=0.8$.
In the Gompertz model described by Eq. (\ref{Gompertz}), these parameters
correspond to $P=N_{\infty}$, $R=\beta\left(1-\frac{\gamma}{\beta}\right)$ and $Q=\log\frac{N_{\infty}}{N_{0}}$,
so that, in particular, the value for the damping rate in the Gompertz
model is $R=0.16$. Blue lines show the typical symmetrical behavior
of the SIS model and of the logistic curve with respect to its inflection
point; red lines show the typical asymmetrical behavior and the long
tail of the Gompertz model with respect to its inflection point. In
fact, the inflection point of the logistic curve is in $N_{\infty}/2=0.4$ at time $t=\frac{1}{R}\log\left(\frac{N_{\infty}}{N_{0}}-1\right)\approx 12.16$, whereas the Gompertz curve is asymmetrical, with an inflection point lower than $N_{\infty}/2$, namely $N=P/e\approx0.3$ at time $t=\frac{{\log}Q}{R}=\frac{\log\log8}{0.16}\approx 4.6$. Finally, all plots have been extended to negative times to highlight the complete behavior of the curves.}

\section{A worst-case scenario SIS model }

Let $G=\left(V,E\right)$ be a simple connected graph on $n$ vertices
(nodes). We will use indistinctly graph or network to refer to $G$.
The networked SIS model is described by the following $n$ equations:

\begin{equation}
\dot{x}_{i}(t)=\beta[1-x_{i}(t)]\sum_{j=1}^{n}A_{ij}x_{j}(t)-\gamma x_{i}(t),\label{eq1}
\end{equation}
where $x_{i}(t)$ represents the probability that node $i$ is infected at time $t$ and $x_{i}(0)=x_{0i}=p$, $\forall i=1,\dots,n$, for some $0<p<1$, the initial probability of being infected, equal for all nodes. Elements $A_{ij}$ are the corresponding entries of the adjacency matrix $\mathbf{A}$ of $G$. In matrix-vector form, Eq. (\ref{eq1}) becomes:

\begin{equation}
\dot{{\bf x}}(t)=\beta\left[{\bf I}-{\rm diag}({\bf x}(t))\right]{\bf A}\,{\bf x}(t)-\gamma{\bf x}(t),\label{eq2}
\end{equation}
with ${\bf x}=\left[{x_1}, {x_2}, \dots , {x_n}\right]\in {\mathbb R}^{n}$ and ${\bf x}(0)={\bf x}_{0}=p\,{\bf u}$, where ${\bf I}$ is the
identity matrix of the corresponding order and ${\bf u}$ the vector whose components are all equal to $1$. In what follows, we write in general ${\bf v}\preceq {\bf w}$ for ${\bf v}, {\bf w}\in {\mathbb R}^{n}$ if ${v_i}\leq {w_i},\ \forall i=1,\dots , n$.

Let us now rewrite the networked SIS model on the basis of the information
content $I_{i}(t)$ that a given node $i$ is not infected at time
$t$. The information content (also known as surprisal, self-information,
or Shannon information) is given by

\begin{equation}
I_{i}(t)=-\log[1-x_{i}(t)],\quad\forall i=1,\dots,n,\label{eq3}
\end{equation}
where $s_{i}\left(t\right)=1-x_{i}(t)$ is the probability of not
being infected at time $t$, i.e., of being susceptible. The information
content SIS model (IC-SIS) is now written as

\begin{equation}
\dot{I}_{i}(t)=\beta\sum_{j=1}^{n}A_{ij}\left(1-e^{-I_{j}\left(t\right)}\right)-\gamma\sum_{j=1}^{n}\delta_{ij}\left(e^{I_{j}\left(t\right)}-1\right),\label{eq4}
\end{equation}
with $\delta_{ij}$ being the Kronecker delta and $I_{i}(0)=-\log(1-p)=-\log q$,
where $q=1-p$.

Let us consider the function $f(I)=1-e^{-I}$, which is increasingly
concave, so that we can write

\[
f(I)<f(I_{0})+f'(I_{0})(I-I_{0})=e^{-I_{0}}I+1-e^{-I_{0}}(I_{0}+1).
\]

Analogously, because $g(I)=e^{I}-1$ is increasingly convex we have

\[
g(I)>g(I_{0})+g'(I_{0})(I-I_{0})=e^{I_{0}}I-1-e^{I_{0}}(I_{0}-1).
\]

Therefore, we have

\[
\dot{I}_{i}(t)<\beta\sum_{j=1}^{n}A_{ij}\left[e^{-I_{0}}I_{j}+1-e^{-I_{0}}(I_{0}+1)\right]-\gamma\sum_{j=1}^{n}\delta_{ij}\left[e^{I_{0}}I_{j}-1-e^{I_{0}}(I_{0}-1)\right].
\]

Let us call the right-hand-side part of this equation the worst-case scenario
IC-SIS as it represents a clear upper bound to the surprisal that
a given node $i$ is not infected at time $t$. Let us designate it
by $\dot{\hat{I}}_{i}(t)$ and taking into account that $x_{0}=p$,
$I_{0}=-\log q$, $e^{-I_{0}}=q$ and $e^{I_{0}}=1/q$, we can write
it as

\[
\dot{\hat{I}}_{i}(t)=\beta q\sum_{j=1}^{n}A_{ij}\hat{I}_{j}(t)-\frac{\gamma}{q}\sum_{j=1}^{n}\delta_{ij}\hat{I}_{j}(t)+\beta(p+q\log q)\sum_{j=1}^{n}A_{ij}-\gamma\,\frac{p+\log q}{q},
\]
where $\hat{I}_{i}(t)$ is the worst-case scenario surprisal that
the node $i$ is not infected at time $t$. When written in matrix-vector
form

\begin{equation}
\dot{\hat{\mathcal{{\bf \mathscr{I}}}}}(t)=\left(\beta q{\bf A}-\frac{\gamma}{q}{\bf I}\right){\hat{{\bf \mathscr{I}}}}(t)+\left[\beta(p+q\log q){\bf A}-\frac{\gamma}{q}(p+\log q){\bf I}\right]{\bf u},\label{eq7}
\end{equation}
it is easy to realize that this equation has the linear form

\begin{equation}
\dot{\hat{\mathcal{{\bf \mathscr{I}}}}}(t)={\bf B}{\hat{{\bf \mathscr{I}}}}+{\bf b},\label{eq8}
\end{equation}
where ${\bf B}\coloneqq q\beta{\bf A}-\frac{\gamma}{q}{\bf I}$ and
${\bf b}\coloneqq\left[(p+q\log q)\beta{\bf A}-(p+\log q)\frac{\gamma}{q}{\bf I}\right]{\bf u}$.

As a way of comparison, we will also consider the ``standard''
linearization of the SIS model around the point ${\bf 0}$:

\begin{equation}
\dot{\mathbf{x}}\left(t\right)=\left(\beta\mathbf{A}-\gamma\mathbf{I}\right)\mathbf{x}\left(t\right), \label{eq9}
\end{equation}
with initial condition $\mathbf{x}_{0}$, whose solution is ${\mathbf{\bar x}}\left(t\right)=\exp\left[\left(\beta\mathbf{A}-\gamma\mathbf{I}\right)t\right]\mathbf{x}_{0}$.
It is easy to realize that this solution is exponentially unstable. We now formally prove that the solution $\mathbf{\hat{x}}\left(t\right)\coloneqq 1-\exp\big[-{\hat{{\bf \mathscr{I}}}}\big]$
to the worst-case scenario SIS model is an upper bound -- therefore
a worst-case scenario solution -- to the exact SIS model solution,
i.e. $\mathbf{x}\left(t\right)\preceq\mathbf{\hat{x}}\left(t\right),$ and that it is a lower bound to the divergent solution of the linearized model, i.e. $\mathbf{\hat{x}}\left(t\right)\preceq {\mathbf{\bar{x}}}\left(t\right)$.

\begin{thm}
Let $\mathbf{x}\left(t\right)$, $\mathbf{\hat{x}}\left(t\right)$ and $\mathbf{\bar{x}}\left(t\right)$ be, respectively, the solution of the exact SIS model in Eq. (\ref{eq2}), of the worst-case scenario IC-SIS model obtained from Eq. (\ref{eq7}) and of the linearized SIS model in Eq. (\ref{eq9}), with the same initial conditions: $\mathbf{x}\left(0\right)=\mathbf{\hat{x}}\left(0\right)={\mathbf{\bar{x}}}\left(0\right)=p\mathbf{u}$.
Then, $\forall t\geq0$, 
\begin{equation}
\mathbf{x}\left(t\right)\preceq\mathbf{\hat{x}}\left(t\right)\preceq {\mathbf{\bar{x}}}\left(t\right).\label{eq10}
\end{equation}
where the second inequality holds if $\beta_{e}\coloneqq \frac{\beta}{\gamma}>\frac{1}{q^{2}k_{\rm min}}$, with $k_{\rm min}$ minimum degree of the graph.
\end{thm}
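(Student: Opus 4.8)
The natural instrument for the two inequalities in (\ref{eq10}) is the comparison principle for cooperative (quasi-monotone) ODE systems: if $\dot{\mathbf y}=\Phi(\mathbf y)$ and $\dot{\mathbf z}=\Psi(\mathbf z)$ with $\Phi(\mathbf w)\preceq\Psi(\mathbf w)$ for every $\mathbf w$, one of the two fields having nonnegative off-diagonal Jacobian entries, and $\mathbf y(0)\preceq\mathbf z(0)$, then $\mathbf y(t)\preceq\mathbf z(t)$ for all $t\ge0$. The plan is to apply this twice, each time with the common initial datum $p\mathbf u$.

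For $\mathbf x(t)\preceq\hat{\mathbf x}(t)$ I would work in the information-content variables of Eq.~(\ref{eq4}). The exact IC-SIS field $F_i(I_1,\dots,I_n)=\beta\sum_j A_{ij}(1-e^{-I_j})-\gamma(e^{I_i}-1)$ is cooperative, since $\partial F_i/\partial I_j=\beta A_{ij}e^{-I_j}\ge0$ for $i\neq j$. By construction the worst-case field $\hat F$ is obtained by replacing the increasingly concave $f(I)=1-e^{-I}$ by its tangent at $I_0=-\log q$ (which lies above $f$) and the increasingly convex $g(I)=e^{I}-1$ by its tangent at $I_0$ (which lies below $g$); as $f$ enters with the nonnegative weight $\beta A_{ij}$ and $g$ with the nonpositive weight $-\gamma\delta_{ij}$, both substitutions can only raise each $\dot I_i$, so $\hat F(\mathbf w)\succeq F(\mathbf w)$ for every $\mathbf w$. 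Since $I_i(0)=\hat I_i(0)=-\log q$, the comparison principle gives $I_i(t)\le\hat I_i(t)$ for all $i$ and $t$, and the increasing map $x\mapsto1-e^{-x}$ applied componentwise yields $\mathbf x(t)\preceq\hat{\mathbf x}(t)$.

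For $\hat{\mathbf x}(t)\preceq\bar{\mathbf x}(t)$ — where the hypothesis on $\beta_e$ enters — I would return to the $\mathbf x$-variables, in which $\bar{\mathbf x}$ obeys the linear system with the Metzler matrix $\mathbf C=\beta\mathbf A-\gamma\mathbf I$, so $e^{\mathbf C t}\succeq0$. With $\mathbf v=\bar{\mathbf x}-\hat{\mathbf x}$ one has $\mathbf v(0)=\mathbf 0$ and $\dot{\mathbf v}=\mathbf C\mathbf v+(\mathbf C\hat{\mathbf x}-\dot{\hat{\mathbf x}})$, so $\mathbf v(t)=\int_0^t e^{\mathbf C(t-s)}(\mathbf C\hat{\mathbf x}-\dot{\hat{\mathbf x}})(s)\,ds$, and it suffices to prove the differential inequality $\dot{\hat{\mathbf x}}(t)\preceq\mathbf C\hat{\mathbf x}(t)$ along the worst-case trajectory. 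Let $\mathcal S_i(\mathbf x)=\beta(1-x_i)\sum_j A_{ij}x_j-\gamma x_i$ denote the exact SIS field; one has $[\mathbf C\mathbf x-\mathcal S(\mathbf x)]_i=\beta x_i\sum_j A_{ij}x_j$, while along the worst-case solution $\dot{\hat x}_i=(1-\hat x_i)\hat F_i$ and $\mathcal S_i(\hat{\mathbf x})=(1-\hat x_i)F_i$ (both read at $\hat I_k=-\log(1-\hat x_k)$), so $\dot{\hat x}_i-\mathcal S_i(\hat{\mathbf x})=(1-\hat x_i)(\hat F_i-F_i)=(1-\hat x_i)\bigl(\beta\sum_j A_{ij}\Delta_j+\gamma E_i\bigr)$, where $\Delta_j=q\,\tilde h\!\bigl(\tfrac{1-\hat x_j}{q}\bigr)$ with $\tilde h(s)=s-1-\log s\ge0$ and $(1-\hat x_i)E_i=h\!\bigl(\tfrac{1-\hat x_i}{q}\bigr)$ with $h(s)=s\log s-s+1\in[0,1)$. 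Subtracting, $[\mathbf C\hat{\mathbf x}-\dot{\hat{\mathbf x}}]_i\ge0$ becomes exactly
\[
\beta\Bigl[\hat x_i\sum_j A_{ij}\hat x_j-(1-\hat x_i)\sum_j A_{ij}\Delta_j\Bigr]\ \ge\ \gamma\, h\!\Bigl(\tfrac{1-\hat x_i}{q}\Bigr),\qquad i=1,\dots,n,\ t\ge0.
\]

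Before estimating this I would use the hypothesis to confine the trajectory: differentiating $\dot{\hat{\mathscr I}}=\mathbf B\hat{\mathscr I}+\mathbf b$ gives $\tfrac{d}{dt}\dot{\hat{\mathscr I}}=\mathbf B\dot{\hat{\mathscr I}}$, while $\dot{\hat{\mathscr I}}(0)=p(\beta\mathbf A-\tfrac{\gamma}{q}\mathbf I)\mathbf u\succ\mathbf 0$ because $\beta_e>1/(q^2k_{\rm min})$ forces $\beta k_{\rm min}>\gamma/q^2>\gamma/q$; as $\mathbf B=q\beta\mathbf A-\tfrac{\gamma}{q}\mathbf I$ is Metzler, $\dot{\hat{\mathscr I}}(t)=e^{\mathbf B t}\dot{\hat{\mathscr I}}(0)\succeq\mathbf 0$, so $\hat x_k(t)\ge p$, i.e.\ $0<1-\hat x_k(t)\le q$, for all $k$ and $t$. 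The main obstacle is the remaining uniform estimate: the displayed inequality is \emph{not} valid pointwise on $[0,1)^n$ — it fails at states where one node is almost surely infected while a neighbour's probability stays away from $1$, for then $\Delta_j$ is only moderate but weighted by the small factor $1-\hat x_i$, whereas $h\!\bigl(\tfrac{1-\hat x_i}{q}\bigr)$ is near its supremum $1$. One must instead use that the worst-case flow keeps the surprisals $\hat I_k=-\log(1-\hat x_k)$ within bounded ratios of one another — the graph being connected, $\mathbf B$ is irreducible, so $\hat{\mathscr I}(t)$ aligns with the Perron eigenvector of $\mathbf A$ — together with the degree/rate hypothesis, which keeps $\beta\hat x_i\sum_j A_{ij}\hat x_j$ above $\gamma$ all along (asymptotically this term tends to $\beta k_i$ and $h(\cdot)\to1$, so there the requirement relaxes to $\beta_e\ge1/k_{\rm min}$, the surplus factors $q^{-2}$ and $k_{\rm min}$ covering the transient regime). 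Quantifying that transient control is the real content; the rest is the soft monotone-systems machinery, and once $\dot{\hat{\mathbf x}}\preceq\mathbf C\hat{\mathbf x}$ is in hand, $\mathbf v(t)\succeq\mathbf 0$ follows from $e^{\mathbf C t}\succeq0$.
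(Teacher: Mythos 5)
Your first inequality is handled correctly and by essentially the same route as the paper: the paper also compares $\dot I_i$ with $\dot{\hat I}_i$ via the tangent-line bounds and invokes the comparison lemma of Lee et al.\ (your explicit verification that the IC-SIS field is cooperative is a welcome addition, since that is the hypothesis that makes the lemma applicable).

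The second inequality is where you have a genuine gap, and you say so yourself. By reducing $\hat{\mathbf x}\preceq\bar{\mathbf x}$ to the state-space subsolution property $\dot{\hat{\mathbf x}}(t)\preceq\mathbf C\hat{\mathbf x}(t)$ with $\mathbf C=\beta\mathbf A-\gamma\mathbf I$, you arrive at a pointwise inequality that, as you concede, fails on parts of $[0,1)^n$, and the ``transient control'' you would need to confine the trajectory to the good region is precisely what is missing; an appeal to alignment with the Perron eigenvector is not a proof. This reduction is also stronger than necessary. The paper's argument avoids the state-space comparison entirely by exploiting the fact that \emph{both} $\hat{\mathscr I}(t)$ and $\bar{\mathbf x}(t)$ solve linear autonomous systems with closed-form solutions, so their derivatives can be compared as explicit functions of $t$: one writes
\begin{equation*}
\dfrac{d\hat{\mathbf x}}{dt}=e^{-\hat{\mathscr I}}\dfrac{d\hat{\mathscr I}}{dt}\preceq\dfrac{d\hat{\mathscr I}}{dt}=e^{\left(q\beta\mathbf A-\frac{\gamma}{q}\mathbf I\right)t}\left[p\left(\beta\mathbf A-\tfrac{\gamma}{q}\mathbf I\right)\mathbf u\right],
\end{equation*}
the first step using $e^{-\hat I_i}\le1$ together with the nonnegativity of $d\hat{\mathscr I}/dt$ (which your Metzler argument does establish under the degree hypothesis), and then dominates the right-hand side componentwise by $e^{(\beta\mathbf A-\gamma\mathbf I)t}\left[p(\beta\mathbf A-\gamma\mathbf I)\mathbf u\right]=d\bar{\mathbf x}/dt$, using that $e^{q\beta\mathbf A t}\preceq e^{\beta\mathbf A t}$ entrywise (both are nonnegative matrices with ordered entries, so the power series are ordered term by term), that $\mathbf 0\preceq(\beta\mathbf A-\frac{\gamma}{q}\mathbf I)\mathbf u\preceq(\beta\mathbf A-\gamma\mathbf I)\mathbf u$ thanks to $\beta k_{\min}>\gamma/q^{2}>\gamma/q>\gamma$, and that the residual scalar factor $e^{\frac{p}{q}\gamma t}\ge1$. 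Since both trajectories start at $p\mathbf u$, integrating the ordered derivatives gives $\hat{\mathbf x}(t)\preceq\bar{\mathbf x}(t)$. If you want to salvage your write-up, replace the subsolution reduction by this direct derivative comparison; as it stands, the key estimate of your second half is unproven and, by your own analysis, not provable in the pointwise form you state it.
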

\begin{proof}
We first prove that $\mathbf{x}\left(t\right)\preceq\mathbf{\hat{x}}\left(t\right)$.
By the method of variation of parameters, the solution of the worst-case scenario IC-SIS model is
\begin{equation}
\hat{{\bf \mathscr{I}}}(t)=e^{{\bf B}t}\,\mathscr{I}_{0}+\left(e^{{\bf B}t}-{\bf I}\right){\bf B}^{-1}{\bf b},\label{eq11}
\end{equation}
which is equivalent to
\begin{equation}
\hat{{\bf \mathscr{I}}}(t)=e^{{\bf B}t}\left[{\bf B}^{-1}{\bf b}-\log q\,{\bf u}\right]-{\bf B}^{-1}{\bf b}.\label{eq12}
\end{equation}
Since $I_{i}(0)=\hat{I}_{i}(0)=-\log q$ and $\dot{I}_{i}(t)<\dot{\hat{I}}_{i}(t),\ \forall t\geq 0$, $\forall i=1,\dots, n$, following Lemma A.1 by Lee et al. \citep{lee2019transient}, p. 11,
\begin{equation}
	\hat{x}_{i}(t)=1-e^{-\hat{I}_{i}(0)}.\label{eq13}
\end{equation}
is an upper bound for the original SIS network model solution ${x}_{i}(t), \forall t\geq 0$, $\forall i=1,\dots, n$, and this proves the first inequality.
We now prove that $\mathbf{\hat{x}}\left(t\right)\preceq {\mathbf{\bar{x}}}\left(t\right)$.
To this purpose, let us assume that $q\beta k_{\rm min}-\frac{\gamma}{q}>0$, which implies $q\beta \lambda_{1}-\frac{\gamma}{q}>0$, where $\lambda_1$ is the eigenvalue corresponding to the dominant eigenvector of $\bf A$. Under this condition, since $q<1$, we have $\beta \lambda_1 \geq \beta k_{\rm min}> \frac{\gamma}{q^2}> \frac{\gamma}{q} > {\gamma}$ and both the solutions of the upper bound problem $\mathbf{\hat{x}}\left(t\right)$ and of the linearized problem ${\mathbf{\bar{x}}}\left(t\right)$ are increasing functions in time. Again, by Lemma A.1 in \citep{lee2019transient}, since the initial conditions are the same for both processes, i.e., $\mathbf{\hat{x}}\left(0\right)={\mathbf{\bar{x}}}\left(0\right)=\mathbf{x}_{0}=p\mathbf{u}$,
it is enough to prove that
\begin{equation}
\dfrac{d\mathbf{\hat{x}}\left(t\right)}{dt}\preceq\dfrac{d{\mathbf{\bar{x}}}\left(t\right)}{dt}
\label{eq14}
\end{equation}
for all $t\geq0$. Being $\mathbf{\hat{x}}\left(t\right)=1-e^{-\mathbf{\hat{\mathscr{I}}}}$,
then we have
\begin{equation}
\dfrac{d\mathbf{\hat{x}}\left(t\right)}{dt}=e^{-\mathbf{\hat{\mathscr{I}}}}\dfrac{d\mathbf{\hat{\mathscr{I}}}\left(t\right)}{dt}\preceq\dfrac{d\hat{\mathbf{\mathscr{I}}}\left(t\right)}{dt}\label{eq15}
\end{equation}
for all $t\geq0$, where the inequality in Eq. (\ref{eq15}) follows from $e^{-{\hat{I}_{i}}}<1$
for all $\hat{I}_{i}\in[0,\infty)$. By Eq. (\ref{eq12}) we have
\[
\dfrac{d\mathbf{\hat{\mathscr{I}}}\left(t\right)}{dt}=e^{\mathbf{B}t}\mathbf{B}\left[\mathbf{B}^{-1}{\bf b}-\log q\,{\bf u}\right]=e^{\mathbf{B}t}\left[{\bf b}-\log q\,\mathbf{B}{\bf u}\right]
\]
where $\mathbf{B}=\beta q\mathbf{A}-\frac{\gamma}{q}\mathbf{I}$ and
${\bf b}=\left[\left(p+q\log q\right)\beta\mathbf{A}-\left(p+\log q\right)\frac{\gamma}{q}\mathbf{I}\right]{\bf u}$.
Since
\[
{\bf b}-\log q\,\mathbf{B}{\bf u}=p\left(\beta\mathbf{A}-\frac{\gamma}{q}\mathbf{I}\right){\bf u}
\]
we have
\begin{equation}
\dfrac{d\mathbf{\hat{\mathscr{I}}}\left(t\right)}{dt}=e^{\left(q\beta {\bf A}-\frac{\gamma}{q}{\bf I}\right)t}\left[p\left(\beta{\bf A}-\frac{\gamma}{q}{\bf I}\right){\bf u}\right]\preceq e^{\left(\beta{\bf A}-\gamma{\bf I}\right)t}\left[p\left(\beta{\bf A}-\gamma{\bf I}\right){\bf u}\right]=\dfrac{d{\mathbf{\bar{x}}}\left(t\right)}{dt}.\label{eq16}
\end{equation}
The last inequality can be justified in the following way. Being $\bf A$ and $\bf I$ commuting matrices, Eq. (\ref{eq16}) is equivalent to $e^{q \beta {\bf A}t}e^{-\frac{\gamma}{q}{\bf I}t}\left(\beta{\bf A}-\frac{\gamma}{q}{\bf I}\right){\bf u}\preceq e^{\beta{\bf A}t}e^{-{\gamma}{\bf I}t}\left(\beta{\bf A}-\gamma{\bf I}\right){\bf u}$ and, hence, to the inequality
\begin{equation*}
e^{q \beta {\bf A}t}\left(\beta{\bf A}-\frac{\gamma}{q}{\bf I}\right){\bf u}\preceq e^{\frac{p}{q} \gamma t} e^{\beta{\bf A}t}\left(\beta{\bf A}-\gamma{\bf I}\right){\bf u}.
\end{equation*}
Moreover, $q \beta {\bf A}\succeq 0$ and $\beta {\bf A}\succeq 0$, with $q \beta {\bf A}\preceq  \beta {\bf A}$ element by element, such that
\begin{equation*}
	{\bf 0}\preceq \left(\beta{\bf A}-\frac{\gamma}{q}{\bf I}\right){\bf u}\preceq \left(\beta{\bf A}-{\gamma}{\bf I}\right){\bf u}
\end{equation*}
where $\bf 0$ is the null vector, since we assume $\beta k_{\rm min}-\frac{\gamma}{q}>0$. Therefore, we have $e^{q \beta {\bf A}t}\left(\beta{\bf A}-\frac{\gamma}{q}{\bf I}\right){\bf u}\preceq e^{\beta {\bf A}t}\left(\beta{\bf A}-\gamma{\bf I}\right){\bf u}$, for all $t\geq 0$ and, being $e^{\beta {\bf A}t}\left(\beta{\bf A}-\gamma{\bf I}\right){\bf u}\preceq e^{\frac{p}{q} \gamma t} e^{\beta{\bf A}t}\left(\beta{\bf A}-\gamma{\bf I}\right){\bf u}$, for all $t\geq 0$, all the components of the vector in the left hand side of Eq. (\ref{eq16}) are less than or equal to the corresponding components of the vector in the right hand side. Finally Eq. (\ref{eq15}) and Eq. (\ref{eq16}) imply Eq. (\ref{eq14}) and this ends the proof.
\end{proof}
\medskip{}

\begin{remark}
If $\gamma=0$, we have ${\bf B}=q\beta{\bf A}$ and ${\bf b}=(p+q\log q)\beta{\bf A}{\bf u}$
so that ${\bf B}^{-1}{\bf b}=\left(\frac{p}{q}+\log q\right){\bf u}$.
Solution in Eq. (\ref{eq12}) reduces to

\[
\hat{{\bf \mathscr{I}}}(t)=\frac{p}{q}e^{q\beta{\bf A}t}{\bf u}-\left(\frac{p}{q}+\log q\right){\bf u},
\]
and it is equal to the solution for the SI Model by Lee et al. (see \citep{lee2019transient} and \citep{lee2019transientProceedings}). Let us observe that if $t=0$: $\hat{{\bf \mathscr{I}}}(0)=-\log q\,{\bf u}$
and ${{\bf \hat{x}}}(0)=p$, as expected. Moreover, for $t\to+\infty$,
$\hat{{\bf \mathscr{I}}}(t)\to+\infty$ and ${{\bf \hat{x}}}(t)\to{\bf u}$. 
\end{remark}

\begin{remark}
If $\beta=0$, ${\bf B}=-\frac{\gamma}{q}{\bf I}$ and ${\bf b}=-(p+\log q)\frac{\gamma}{q}{\bf u}$
so that ${\bf B}^{-1}{\bf b}=(p+\log q){\bf u}$. Solution in Eq. (\ref{eq12})
reduces to

\[
\hat{{\bf \mathscr{I}}}(t)=p\left(e^{-\frac{\gamma}{q}t}-1\right){\bf u}-\log q\,{\bf u}.
\]
Let us observe that for $t\to+\infty$, $\hat{{\bf \mathscr{I}}}(t)\to -(p+\log q){\bf u}$
and $\hat{{\bf x}}(t)\to(1-qe^{p}){\bf u}$. This bound doesn't converge
to ${\bf x}^{\star}={\bf 0}$ as $t\to+\infty$ but to $(1-qe^{p}){\bf u}$.
Observe that $0<1-qe^{p}<p$, as expected, and that $1-qe^{p}\sim p^2$
as $p\to0$. 
\end{remark}

\medskip{}
\begin{remark}
Let us consider $\beta\neq0$ and $\gamma\neq0$. The exponential
term in Eq. (\ref{eq12}) may be written as

\[
e^{{\bf B}t}=e^{(q\beta{\bf A}-\frac{\gamma}{q}{\bf I})t}=e^{(q\beta{\bf M}{\bf \Lambda}{\bf M}^{T}-\frac{\gamma}{q}{\bf I})t}=e^{{\bf M}(q\beta{\bf \Lambda}-\frac{\gamma}{q}{\bf I}){\bf M}^{T}t}={\bf M}e^{(q\beta{\bf \Lambda}-\frac{\gamma}{q}{\bf I})t}{\bf M}^{T},
\]
where ${\bf \Lambda}$ is the diagonal matrix of the eigenvalues of
${\bf A}$ and ${\bf M}$ is the orthogonal matrix whose columns are
the eigenvectors of ${\bf A}$. As $t$ grows to $+\infty$, the diagonal
exponential terms $e^{(q\beta\lambda_{i}-\frac{\gamma}{q})t}$ grows
to $+\infty$ if $(q\beta\lambda_{i}-\frac{\gamma}{q})>0$. In particular,
if $(q\beta\lambda_{1}-\frac{\gamma}{q})<0$, no one of these terms
grows to $+\infty$ and the epidemic decays. Thus, we can identify
a threshold given by the following condition

\begin{equation}
\beta_{e}=\frac{\beta}{\gamma}<\frac{1}{q^{2}\lambda_{1}}\coloneqq \tau,\label{eq17}
\end{equation}
and we can identify $\tau=\frac{1}{q^{2}\lambda_{1}}$ as the threshold
of the worst-case scenario solution. Then, if $\beta_{e}<\tau$ the epidemic decays;
if $\beta_{e}>\tau$ the epidemic grows. Let us observe that, as $\lambda_{1}$ increases (and so does the average degree in the
network), condition in Eq. (\ref{eq17}) becomes stricter and the spread of epidemics
is easier; moreover, in general, $\tau$ is greater than $1/\lambda_{1}$ and, as $p$ decreases, it approaches the lower bound $1/\lambda_{1}$.
\end{remark}

\subsection{Gompertz-like function from SIS}
Let us start by defining a matrix ${\bf D}$ as
\begin{equation}
	{\bf D}\coloneqq{\bf I}-q^{2}\beta_{e}{\bf A},\label{eq18a}
\end{equation}
so that ${\bf B}=-\frac{\gamma}{q}{\bf D}$. Then we can rewrite the
vector ${\bf B}^{-1}{\bf b}-\log q\,{\bf u}$ appearing in Eq. (\ref{eq12})
in the following way:
\[
{\bf B}^{-1}{\bf b}-\log q\,{\bf u}=\frac{p}{q}\left[{\bf I}-p({\bf I}-q^{2}\beta_{e}{\bf A})^{-1}\right]{\bf u}=\frac{p}{q}\left[{\bf I}-p{\bf D}^{-1}\right]{\bf u}.
\]
In this way, the solution of the worst-case scenario IC-SIS model becomes
\begin{equation}
	\hat{{\bf \mathscr{I}}}(t)=\frac{p}{q}\left[e^{-\frac{\gamma}{q}{\bf D}t}-{\bf I}\right]\left[{\bf I}-p{\bf D}^{-1}\right]{\bf u}-\log q\,{\bf u}.\label{eq19}
\end{equation}
We are now in a position to present the main result of the current work. 
\begin{thm}
	\label{main theorem_new}Let $G$ be a graph in which there is a IC-SIS
	dynamics taking place on its nodes and edges. Let $\psi_{\nu}\left(i\right)$
	be the $i$-th entry of the $\nu$-th (orthonormalized) eigenvector
	of $\mathbf{A}$, associated with the eigenvalue $\lambda_{\nu}$, with $\lambda_{n}\leq \dots \leq \lambda_{2}\leq\lambda_{1}$.
	Then, under the condition $\beta_e <q\tau$, the worst-case scenario probability $\tilde{s}_{i}(t)$ that a node $i$ is susceptible of getting infected for time $t\rightarrow\infty$ is given by a Gompertz function of the form
	\begin{equation}
		\tilde{s}_{i}(t) =P_{i}\exp\left[-Q_{i}e^{-Rt}\right]\label{Gompertz1}
	\end{equation}
	where $P_{i}=q\exp \left[ \sum_{\nu=1}^{n}\alpha_{\nu}\zeta_{\nu}(i)\right]$, $Q_{i}=\alpha_{1} \zeta_{1}(i)$ and $R=\frac{\gamma}{q}(1-q^{2}\beta_{e}\lambda_{1})$ with $\alpha_{\nu}\coloneqq\frac{p-pq\beta_{e}\lambda_{\nu}}{1-q^{2}\beta_{e}\lambda_{\nu}}$ and $\zeta_{\nu}(i)\coloneqq\psi_{\nu}(i)\sum_{j=1}^{n}\psi_{\nu}(j)$, $\forall \nu=1, \dots, n$.
	\label{maintheorem}
\end{thm}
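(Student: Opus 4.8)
The plan is to substitute the spectral decomposition of $\mathbf{A}$ into the closed form (\ref{eq19}) for $\hat{\mathscr{I}}(t)$, read off the Gompertz structure mode by mode, and only at the end carry out the $t\to\infty$ asymptotics. Writing $\mathbf{A}=\sum_{\nu=1}^{n}\lambda_{\nu}\psi_{\nu}\psi_{\nu}^{T}$ with $\psi_{\nu}$ the orthonormal eigenvector whose entries are the $\psi_{\nu}(i)$ of the statement, one has $\mathbf{D}=\mathbf{I}-q^{2}\beta_{e}\mathbf{A}=\sum_{\nu}(1-q^{2}\beta_{e}\lambda_{\nu})\psi_{\nu}\psi_{\nu}^{T}$, so $\mathbf{D}^{-1}$ and $e^{-\frac{\gamma}{q}\mathbf{D}t}$ are diagonal in the same basis. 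Expanding $\mathbf{u}=\sum_{\nu}c_{\nu}\psi_{\nu}$ with $c_{\nu}=\psi_{\nu}^{T}\mathbf{u}=\sum_{j}\psi_{\nu}(j)$, and using $1-p=q$ together with the elementary identity $1-p-q^{2}\beta_{e}\lambda_{\nu}=q(1-q\beta_{e}\lambda_{\nu})$, the vector $\frac{p}{q}[\mathbf{I}-p\mathbf{D}^{-1}]\mathbf{u}$ that appears in (\ref{eq19}) collapses to
\[
\frac{p}{q}\bigl[\mathbf{I}-p\mathbf{D}^{-1}\bigr]\mathbf{u}=\sum_{\nu=1}^{n}c_{\nu}\,\frac{p\,(1-q\beta_{e}\lambda_{\nu})}{1-q^{2}\beta_{e}\lambda_{\nu}}\,\psi_{\nu}=\sum_{\nu=1}^{n}c_{\nu}\,\alpha_{\nu}\,\psi_{\nu};
\]
this is the step where the prefactor $p/q$ cancels and $\alpha_{\nu}$ emerges. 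Applying $e^{-\frac{\gamma}{q}\mathbf{D}t}-\mathbf{I}$ and extracting the $i$-th component, with $\zeta_{\nu}(i)=\psi_{\nu}(i)\,c_{\nu}$, gives the scalar formula
\[
\hat{I}_{i}(t)=\sum_{\nu=1}^{n}\alpha_{\nu}\zeta_{\nu}(i)\Bigl(e^{-\frac{\gamma}{q}(1-q^{2}\beta_{e}\lambda_{\nu})t}-1\Bigr)-\log q.
\]

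Next I would pass to the susceptibility $\hat{s}_{i}(t)=e^{-\hat{I}_{i}(t)}$. The constant $-\log q$ and the $-1$'s inside the sum factor out of the exponential, so one obtains the exact identity
\[
\hat{s}_{i}(t)=q\,\exp\Bigl[\sum_{\nu=1}^{n}\alpha_{\nu}\zeta_{\nu}(i)\Bigr]\,\exp\Bigl[-\sum_{\nu=1}^{n}\alpha_{\nu}\zeta_{\nu}(i)\,e^{-\frac{\gamma}{q}(1-q^{2}\beta_{e}\lambda_{\nu})t}\Bigr],
\]
which already displays $P_{i}=q\exp[\sum_{\nu}\alpha_{\nu}\zeta_{\nu}(i)]$. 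Setting $R_{\nu}\coloneqq\frac{\gamma}{q}(1-q^{2}\beta_{e}\lambda_{\nu})$, the hypothesis $\beta_{e}<q\tau=\frac{1}{q\lambda_{1}}$ yields $q^{2}\beta_{e}\lambda_{1}<q<1$, hence $R_{1}>0$; and since $\lambda\mapsto 1-q^{2}\beta_{e}\lambda$ is strictly decreasing while $\lambda_{1}$ is a simple eigenvalue strictly larger than $\lambda_{2}$ (Perron--Frobenius, $G$ connected), $R\coloneqq R_{1}$ is the unique smallest rate. Therefore the $\nu\geq 2$ terms of the inner sum are $O(e^{-R_{2}t})$ with $R_{2}>R$, the inner sum equals $\alpha_{1}\zeta_{1}(i)\,e^{-Rt}+o(e^{-Rt})$, and $\hat{s}_{i}(t)/\tilde{s}_{i}(t)\to1$ as $t\to\infty$, where $\tilde{s}_{i}(t)=P_{i}\exp[-Q_{i}e^{-Rt}]$ with $Q_{i}=\alpha_{1}\zeta_{1}(i)$ and $R=\frac{\gamma}{q}(1-q^{2}\beta_{e}\lambda_{1})$ — exactly the claimed Gompertz form.

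It then remains to verify that $\tilde{s}_{i}$ is a genuine Gompertz curve, i.e. $P_{i},Q_{i},R>0$: positivity of $P_{i}$ is immediate; $R>0$ because $\beta_{e}<q\tau<\tau$; and $Q_{i}=\alpha_{1}\zeta_{1}(i)>0$ because $\zeta_{1}(i)=\psi_{1}(i)\sum_{j}\psi_{1}(j)>0$ (the Perron eigenvector may be taken with all entries of a single sign, so the product is positive) and $\alpha_{1}=p(1-q\beta_{e}\lambda_{1})/(1-q^{2}\beta_{e}\lambda_{1})>0$ precisely because $\beta_{e}<q\tau$ forces the numerator to be positive while the denominator is already positive. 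This last point is exactly where the strengthened hypothesis $\beta_{e}<q\tau$, rather than merely $\beta_{e}<\tau$, is used. The step I expect to require the most care is making the clause ``for $t\to\infty$'' precise: every exponential mode decays to zero, so the Gompertz curve is the \emph{leading-order} asymptotic of $\hat{s}_{i}(t)$, and one must argue that retaining only the $\lambda_{1}$ mode in the exponent reproduces $\hat{s}_{i}(t)$ up to a $1+o(1)$ factor — which relies on $\lambda_{1}$ being strictly dominant and on $\alpha_{1}\zeta_{1}(i)\neq0$, both verified above. A lesser but real hazard is the bookkeeping in the identity $1-p-q^{2}\beta_{e}\lambda_{\nu}=q(1-q\beta_{e}\lambda_{\nu})$ that produces $\alpha_{\nu}$: its sign and the cancellation of the $p/q$ must be tracked exactly.
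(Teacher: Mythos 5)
Your proposal is correct and follows essentially the same route as the paper: spectral decomposition of $\mathbf{D}$ in the eigenbasis of $\mathbf{A}$, the algebraic identity $1-p-q^{2}\beta_{e}\lambda_{\nu}=q(1-q\beta_{e}\lambda_{\nu})$ producing $\alpha_{\nu}$, exponentiation to get the exact formula for $\hat{s}_{i}(t)$, retention of the slowest-decaying ($\lambda_{1}$) mode for $t\to\infty$, and the positivity checks on $P_{i}$, $Q_{i}$, $R$ via Perron--Frobenius and the hypothesis $\beta_{e}<q\tau$. Your treatment of the asymptotics is in fact slightly more careful than the paper's (you make the $1+o(1)$ claim and the strict spectral gap explicit), but the argument is the same.
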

\begin{proof}
Let's start from Eq. (\ref{eq19})
\begin{equation*}
	\hat{{\bf \mathscr{I}}}(t)=\frac{p}{q}\left[e^{-\frac{\gamma}{q}{\bf D}t}-{\bf I}\right]\left[{\bf I}-p{\bf D}^{-1}\right]{\bf u}-\log q\,{\bf u}
\end{equation*}
Since ${\bf D}={\bf I}-q^{2}\beta_{e}{\bf A}$ is symmetric and there exists an orthogonal matrix ${\bf M}$ whose columns are the eigenvectors of ${\bf A}$ and, consequently, of ${\bf D}$ such that
\begin{equation*}
{\bf D}={\bf M}\, {\rm diag}(1-q^{2}\beta_{e}\lambda_{\nu})\, {\bf M}^{T}
\end{equation*}
we have
\begin{equation*}
\begin{split}
\hat{{\bf \mathscr{I}}}(t)
&=\frac{p}{q}{\bf M}\, {\rm diag}\left[ \left(e^{-\frac{\gamma}{q}(1-q^{2}\beta_{e}\lambda_{\nu})t}-1\right)\,  \left(1-\frac{p}{1-q^{2}\beta_{e}\lambda_{\nu}}\right)\right] {\bf M}^{T}\,{\bf u} -\log q\,{\bf u}\\
&=\frac{p}{q}{\bf M}\, {\rm diag}\left[ \frac{q-q^{2}\beta_{e}\lambda_{\nu}}{1-q^{2}\beta_{e}\lambda_{\nu}} \left(e^{-\frac{\gamma}{q}(1-q^{2}\beta_{e}\lambda_{\nu})t}-1\right)\,  \right] {\bf M}^{T}\,{\bf u} -\log q\,{\bf u}\\
&={\bf M}\, \Lambda(t) {\bf M}^{T}\,{\bf u} -\log q\,{\bf u}
\end{split}
\end{equation*}
where $\Lambda(t)={\rm diag}\left[\eta_{\nu}(t)\right]$ and $\eta_{\nu}(t)=\alpha_{\nu} \left(e^{-\frac{\gamma}{q}(1-q^{2}\beta_{e}\lambda_{\nu})t}-1\right)$ with $\alpha_{\nu}\coloneqq\frac{p-pq\beta_{e}\lambda_{\nu}}{1-q^{2}\beta_{e}\lambda_{\nu}}$. Now, since $\hat{{\bf s}}(t)=e^{-\hat{{\bf \mathscr{I}}}(t)}$:
\begin{equation*}
\begin{split}
\hat{s}_{i}(t)
&=q\exp \left[ -\left( {\bf M}\, \Lambda(t) {\bf M}^{T}\,{\bf u}\right)_{i}  \right]\\
&=q\exp \left[ -\sum_{j=1}^{n}\sum_{\nu=1}^{n}\psi_{\nu}\left(i\right)\psi_{\nu}\left(j\right)\eta_{\nu}(t) \right]\\
&=q\exp \left[ - \sum_{\nu=1}^{n}\zeta_{\nu}(i)\eta_{\nu}(t) \right]\\
\end{split}
\end{equation*}
where $\zeta_{\nu}(i)\coloneqq \psi_{\nu}(i)\sum_{j=1}^{n}\psi_{\nu}(j)$. Now, in order to describe the asymptotic behavior of $\hat{s}_{i}(t)$, let us consider separately the following three cases:
\begin{enumerate}
	\item $\beta_e <q\tau$, that is $1-q\beta_e\lambda_1 >0$, which implies $1-q^2\beta_e\lambda_1 >0$:
\begin{equation}
\begin{split}
\hat{s}_{i}(t)
&= q\exp \left[ - \sum_{\nu=1}^{n}\zeta_{\nu}(i)\eta_{\nu}(t) \right]\\
&\sim q\exp \left[ -\left( \zeta_{1}(i)\alpha_{1} e^{-\frac{\gamma}{q}(1-q^{2}\beta_{e}\lambda_{1})t}-\sum_{\nu=1}^{n}\zeta_{\nu}(i)\alpha_{\nu}\right) \right]\\
&=q\exp \left[  \sum_{\nu=1}^{n}\zeta_{\nu}(i)\alpha_{\nu}\right] \exp \left[- \zeta_{1}(i)\alpha_{1} e^{-\frac{\gamma}{q}(1-q^{2}\beta_{e}\lambda_{1})t} \right]\coloneqq \tilde{s}_{i}(t)\\
\end{split}\label{eq30}
\end{equation}
In particular, for $t\to +\infty$
\begin{equation*}
\hat{s}_{i}(t) \to q\exp \left[ \sum_{\nu=1}^{n}\zeta_{\nu}(i)\alpha_{\nu}\right]
\end{equation*}
The last term in Eq. (\ref{eq30}) can be interpreted as a Gompertz function. Indeed, it is
\begin{equation}
\tilde{s}_{i}(t)= P_{i}\exp\left[-Q_{i}e^{-Rt}\right]
\end{equation}
with $P_{i}=q\exp \left[ \sum_{\nu=1}^{n}\alpha_{\nu}\zeta_{\nu}(i)\right]$, $Q_{i}=\alpha_{1}\zeta_{1}(i)$ and $R=\frac{\gamma}{q}(1-q^{2}\beta_{e}\lambda_{1})$. Let us observe that $\zeta_{1}(i)=\psi_{1}(i)\sum_{j=1}^{n}\psi_{1}(j)$ is positive $\forall i=1,\dots, n$ as the eigenvector associated to the dominant eigenvalue $\lambda_1$ has components all with the same sign. Moreover, in order to get also $Q_{i}>0$, the further condition $1-q\beta_{e}\lambda_{1}>0$, equivalent to $\beta_e <q\tau$, must be satisfied. Let us notice that this condition is stricter than $1-q^2\beta_{e}\lambda_{1}>0$, which is equivalent to $\beta_e<\tau$.
\item $q\tau <\beta_e < \tau$: in this case Eq. (\ref{eq30}) still holds but it cannot be considered a Gompertz function since $\alpha_1 <0$.
\item $\beta_e >\tau$, that is $1-q^2\beta_e\lambda_1 <0$:
\begin{equation}
	\begin{split}
		\hat{s}_{i}(t)
		&= q\exp \left[ -\sum_{\nu=1}^{n}\zeta_{\nu}(i)\eta_{\nu}(t) \right]\\
		&\sim q \exp \left[-\alpha_{1} \zeta_{1}(i) e^{-\frac{\gamma}{q}(1-q^{2}\beta_{e}\lambda_{1})t} \right]\\
	\end{split}\label{eq31a}
\end{equation}
where again $\zeta_{1}(i)>0$ and also $\alpha_{1}=\frac{p-pq\beta_{e}\lambda_{1}}{1-q^{2}\beta_{e}\lambda_{1}}>0$ since now $1-q\beta_e\lambda_1 <0$. Therefore, in this case, we have as expected $\hat{s}_{i}(t)\to 0$ for $t \to + \infty$.
\end{enumerate}
\end{proof}
\begin{remark}
According to Theorem \ref{maintheorem}, when the time is sufficiently large, the solution of the worst-case scenario IC-SIS on a network is a Gompertz function of the type: $f\left(t\right)=P\exp\left[{-Qe^{-Rt}}\right]$, under the condition that the effective infectivity rate is lower than $q\tau =\frac{1}{q\lambda_1}$. This is an important result because, as we have seen in the Fig. \ref{fig0}, the main differences between the SIS model and the Gompertz model occurs not at early times of the disease propagation but when the time is sufficiently large. Additionally, it is also at large time when the two models developed by Frenzen and Murray \citep{frenzen1986cell} coincide with a Gompertz curve, which would open some interesting avenues for future exploration of these models in relation to the current one. 
\end{remark}
\begin{remark}
Let us notice that, according to Eq. (\ref{Gompertz1}), the probability density can be written as
\begin{equation}
	\frac{d\tilde{s}_{i}(t)}{dt}=\tilde{s}_{i}(t)\left[Q_{i}Re^{-Rt}\right]
\end{equation}
and the point of inflection of the Gompertz curve is then given by
\begin{equation}
t=\frac{\log Q_{i}}{R}=\frac{q}{\gamma(1-q^2\beta_e \lambda_1)}\log [\alpha_1 \zeta_1(i)]
\end{equation}
This inflection point, which is the time at which the probability density of recovering from infection is maximum, grows with the logarithm of $\zeta_1(i)$ and $\zeta_1(i)$ is proportional to the eigenvector centrality of node $i$. This means that the more central is the node, in terms of eigenvector centrality, the more this maximum is delayed in time and the slower will be the healing process for that node.
\end{remark}
\begin{remark}
Finally, we conclude by observing that Eq. (\ref{eq19}) can be given the following interesting expression.
When the effective infectivity rate is below the threshold $\tau$, that is if it is satisfied the condition $0<q^{2}\beta_{e}<\frac{1}{\lambda_{1}}$, the inverse ${\bf D}^{-1}=\left[{\bf I}-q^{2}\beta_{e}{\bf A}\right]^{-1}$ can be considered a matrix resolvent and ${\bf D}^{-1}{\bf u}=\left[{\bf I}-q^{2}\beta_{e}{\bf A}\right]^{-1}{\bf u}={\bf u}+{\bf c}$,
where ${\bf c}=\left[\left(\mathbf{I}-q^{2}\beta_{e}\mathbf{A}\right)^{-1}-\mathbf{I}\right]\mathbf{u}$, for $0<q^{2}\beta_{e}<\frac{1}{\lambda_1}$, is the vector whose entries are the so-called Katz centrality indices of the nodes in the network \citep{katz1953}. This index belongs to a family of network descriptors which counts the number of walks in the graph involving the corresponding node, giving more weight to the shorter than to the longer ones  \citep{Estrada2010SIAM}.
Therefore we have
\[
\left[{\bf I}-p{\bf D}^{-1}\right]{\bf u}={\bf u}-p{\bf D}^{-1}{\bf u}={\bf u}-p\,{\bf u}-p\,{\bf c}=q\,{\bf u}-p\,{\bf c}.
\]
and the upper bound solution of the IC-SIS model in Eq. (\ref{eq19}) may be expressed in terms of the Katz centralities of the nodes as
\begin{equation}
	\hat{{\bf \mathscr{I}}}(t)=p\left[e^{-\frac{\gamma}{q}{\bf D}t}-{\bf I}\right]\left[{\bf u}-\frac{p}{q}\,{\bf c}\right]-\log q\,{\bf u}.\label{eq20}
\end{equation}
\end{remark}

\subsection{Computational experiments}

We now present some numerical results from computational experiments carried out on two real-world sexual contact networks. The rationale for choosing networks of this type lies in the fact that they typically host contagion processes that can be described using the SIS model. The spreading of many different epidemics of global interest, like influenza, Ebola, Zika, Chikungunya, and others, has been analyzed by applying this model. For instance, Juher et al. \citep{juher2017} studied in details the Syphilis transmission characteristics and disease evolution on a large sexual network in San Francisco and Zhao et al. \citep{zhao2019} investigate, in the same framework, the temporal patterns and transmission potential of ZIKA virus in eight Brazilian states, over the same period.

Here we focus on two specific sexual contact networks. Both networks
are connected, unweighted and undirected. The first corresponds to
$m=83$ heterosexual contacts between $n=82$ individuals ($35$ males
and $47$ females). In this network there is available information
about the sex of the individuals. The network was studied in details
in \citep{wylie2001patterns} where a subset of individuals were infected
with \textit{Chlamydia Trachomatis} and/or \textit{Neisseria Gonorrhoeae}
in Manitoba, Canada. We consider here the giant connected component
of this network, which spans two broad geographic areas, northern
Manitoba and Winnipeg. The second network of sexual contacts represents
$m=266$ homosexual contacts between $n=250$ individuals. It is known
that 236 individuals (94.4\%) were men, of whom 219 were homosexual,
and 14 (5.6\%) were women. Of the 184 men and 13 women for whom age
was known, mean age (at mid-early period) was 29.5 years for men and
25.8 years for women. The sex of every individual was not reported
in the dataset available. This network has been previously studied
in \citep{potterat2002risk} to explain the modest HIV/AIDS burden
reported from Colorado Springs at the earliest days of the epidemic.
Accordingly, the risk networks of susceptible individuals have been
insufficiently cohesive to sustain substantial endogenous HIV transmission
\citep{potterat2002risk}. An illustration of both networks studied
here is given in Fig. \ref{Networks}.

\begin{figure}[h]
\begin{centering}
\subfloat[]{\begin{centering}
\includegraphics[width=1\textwidth]{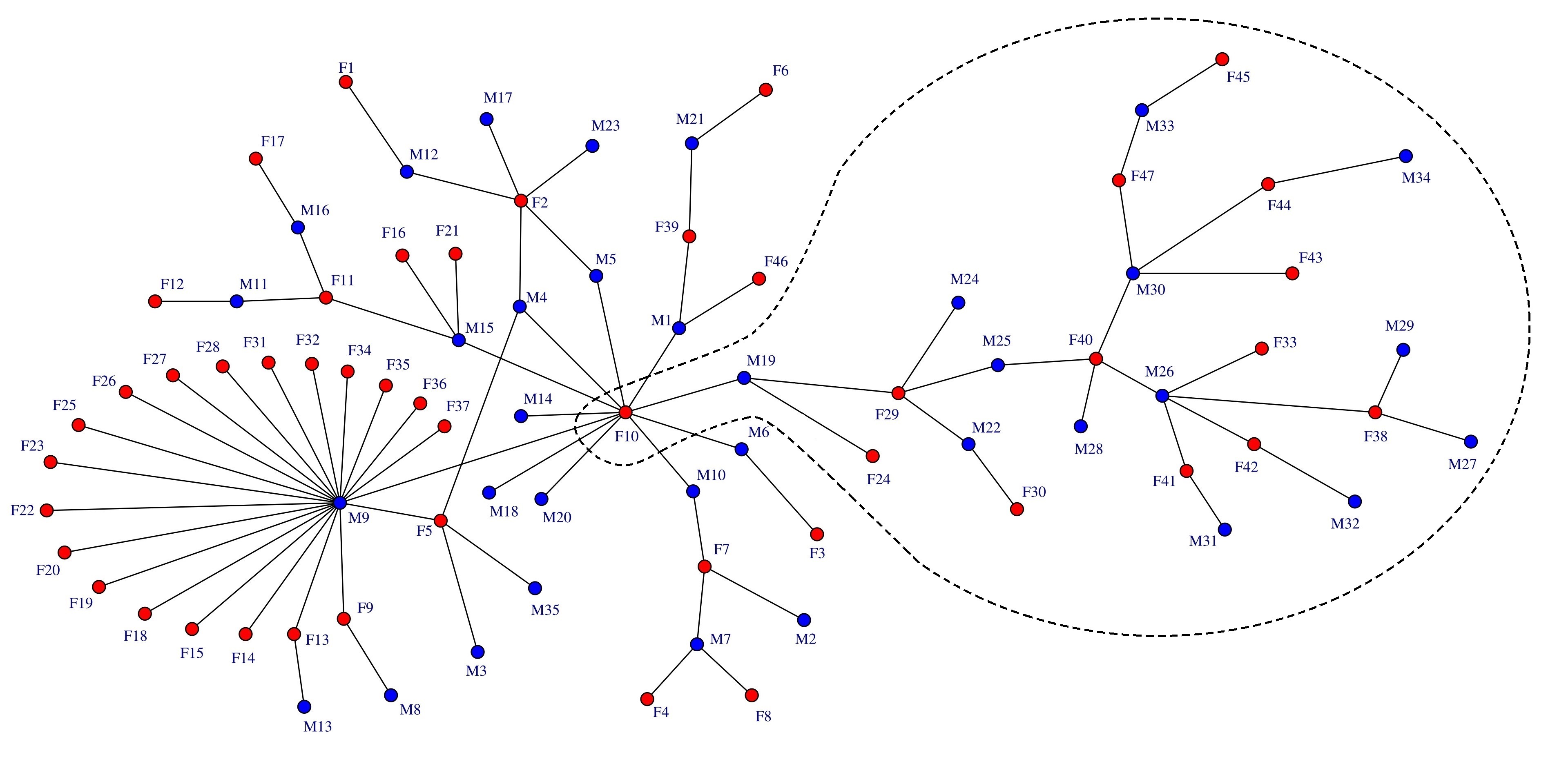} 
\par\end{centering}
}
\par\end{centering}
\begin{centering}
\subfloat[]{\includegraphics[width=1\textwidth]{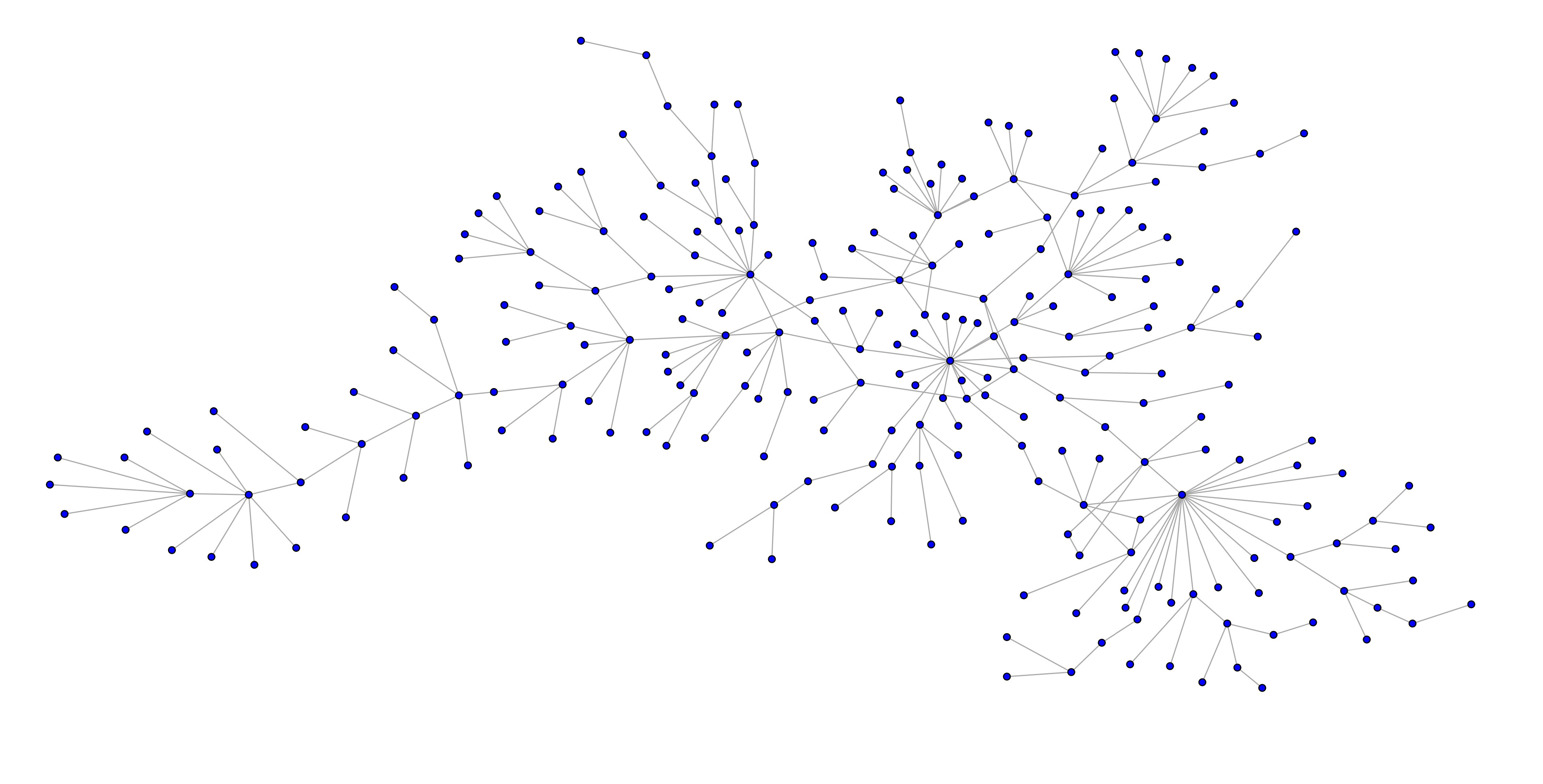}

}
\par\end{centering}
\caption{Illustration of the networks of (a) heterosexual contacts (NHeC) and (b) homosexual contacts (NHoC) studied in the paper. In the NHeC, the nodes are labelled by the sex of the individuals, M for male and F for female. The block of individuals residing in Winnipeg are sketched with a broken line. The rest are from Northern Manitoba.}

\label{Networks} 
\end{figure}

In Table \ref{comparison} we compare some of the general parameters
of both networks (see \citep{estrada2012structure} for definitions
and examples): (i) edge density $\delta=2m/n\left(n-1\right)$ where
$m$ is the number of edges, (ii) average shortest path length $\bar{l}$,
(iii) average Watts-Strogatz clustering coefficient $\bar{C}$ (see
also \citep{watts1998collective}), (iv) degree heterogeneity $\rho$
(see also \citep{estrada2010quantifying,estrada2019degree}), (v)
spectral radius of the adjacency matrix $\lambda_{1}$ (see also \citep{stevanovic2014spectral}),
and (vi) degree assortativity $r$ (see also \citep{newman2003mixing}).
As can be seen, the two networks are very similar to each other, except
that the network of heterosexual contacts (NHeC) is bipartite and thus it contains no
odd cycles, which implies that the clustering coefficient is zero,
while the network of homosexual contacts (NHoC) is not bipartite and it has clustering
different from zero. Both networks are poorly dense, although the NHeC is more densely connected than the NHoC. The degree heterogeneity of both networks is relatively small, their spectral radii are very close to each other, and both networks are degree disassortative indicating a preference of high degree nodes of being connected with
low degree ones.

\begin{table}[h]
\begin{centering}
\begin{tabular}{|c|c|c|}
\hline 
property  & Heterosexual  & Homosexual\tabularnewline
\hline 
\hline 
$\delta$  & 0.025  & 0.0085\tabularnewline
\hline 
$\bar{l}$  & 5.387  & 8.327\tabularnewline
\hline 
$\bar{C}$  & 0  & 0.0298\tabularnewline
\hline 
$\rho$  & 0.0154  & 0.0159\tabularnewline
\hline 
$\lambda_{1}$  & 4.7333  & 4.8454\tabularnewline
\hline 
$r$  & -0.0458  & -0.2782\tabularnewline
\hline 
\end{tabular}
\par\end{centering}
\caption{A few topological parameters of the two networks of sexual contacts
analyzed: $\delta$ is the edge density; $\bar{l}$ is the average
path length; $\bar{C}$ is the average Watts-Strogatz clustering coefficient;
$\rho$ is the degree heterogeneity; $\lambda_{1}$ is the spectral
radius of the adjacency matrix; $r$ is the degree assortativity.}

\label{comparison} 
\end{table}

\textcolor{black}{The main goal of this section is to compare the
networked SIS model and the Gompertz-like solution produced as an
upper bound of the SIS model. The SIS model is used for sexually transmitted
diseases where individuals can get repeatedly infected. Eames and Keeling \citep{eames2002,keeling2005} provide
some specific values for $\beta_{e}$ for both }\textit{\textcolor{black}{Chlamydia}}\textcolor{black}{{}
and }\textit{\textcolor{black}{Gonorrhoea}}\textcolor{black}{{}, according to the different models used to describe the virus
spreading in the same network we analyze here. For the purposes of
our numerical analysis it is enough to keep a mean value equal to
$\beta_{e}=1.5$. Therefore, we will consider a disease propagating
through these two sexual networks, for which we use the following
parameters: $\beta=0.03$, $\gamma=0.02$ and $\beta_{e}=1.5$. In
both cases we consider the same probability $p,$ which for the NHeC is $1/82$ and for the NHoC is $3/250$. This choice
is equivalent to a single initially infected individual in the NHeC; an equal initial probability is then adopted in the NHoC to compare the time evolution of the epidemics in both.}

\subsection{Time evolution of the cumulative probability}

We start our analysis by considering the time evolution of the cumulative
number of infected individuals in both networks. In Fig. \ref{Time_evolution}
we illustrate the results of the simulations with the parameters given
before using the exact solution of the SIS model (panel (a)) as well
as with the Gompertz-like solution of the worst-case scenario (panel
(b)). As can be seen in panel (a) the evolution of the infection predicted
by the SIS model is practically identical for both networks up to
approximately $t=40$. From that time the propagation of the infection
in the NHoC is faster than in the NHeC.
In the steady state the number of infected individuals in both networks
is below 65\% with a slightly bigger percentage in the NHoC than
in the NHeC. In the case of the Gompertz-like model
the growing process occurs identically for both networks for a longer
time than in the SIS model, i.e., up to $t=50$ (see panel (b)). Also,
after this time the infection grows faster in the NHoC
reaching the 100\% of infected individuals at $t=90$ while the NHeC
reaches the saturation at $t=110$.

To gain more insights about the importance of these differences we
consider some data observed empirically in \citep{wylie2001patterns}.
According to this observation the branch formed by individuals $F6-M21-F39-M1-F46-F10$
(branch 1) turned out to be completely infected while the branch of
individuals $F45-M33-F47-M30-F43-F4$0 (branch 2) was not completely
infected at the same time \textcolor{black}{(only 66.7\% of individuals
infected)}. Notice that both branches contains the same number of
nodes, as well as the same proportion of males/females. As can be
seen in Fig. \ref{Time_evolution} (panel (c)) the exact solution of
the SIS model predicts a faster growth of the infection in branch
1, but when this branch reaches its maximum, which is not 100\% of
infected nodes, the difference with branch 2 is insignificant. This
indicates that using SIS we cannot observe branch 1 completely infected
and branch 2 only partially infected. The Gompertz approach (see panel
(d)) also predicts a faster growth of the infection on branch 1, but
with bigger differences than in the case reported by SIS. For instance,
at $t=80$, SIS predicts 45.7\% of infection in branch 1 and 35.4\%
in branch 2, while Gompertz predicts 90.9\% for branch 1 and only
56.1\% for branch 2. More importantly, at $t=100$ all the nodes in
branch 1 are infected, while the probability of being infected for
nodes in branch 2 is below 80\%. Therefore, in this case we can reproduce,
even without fitting the parameters $\beta$ and $\gamma$, what has
been observed empirical in the real-world evolution of a sexually
transmitted disease in this network.

%
%
%
%
%

\begin{figure}[h]
	\subfloat[]{\includegraphics[width=0.45\textwidth]{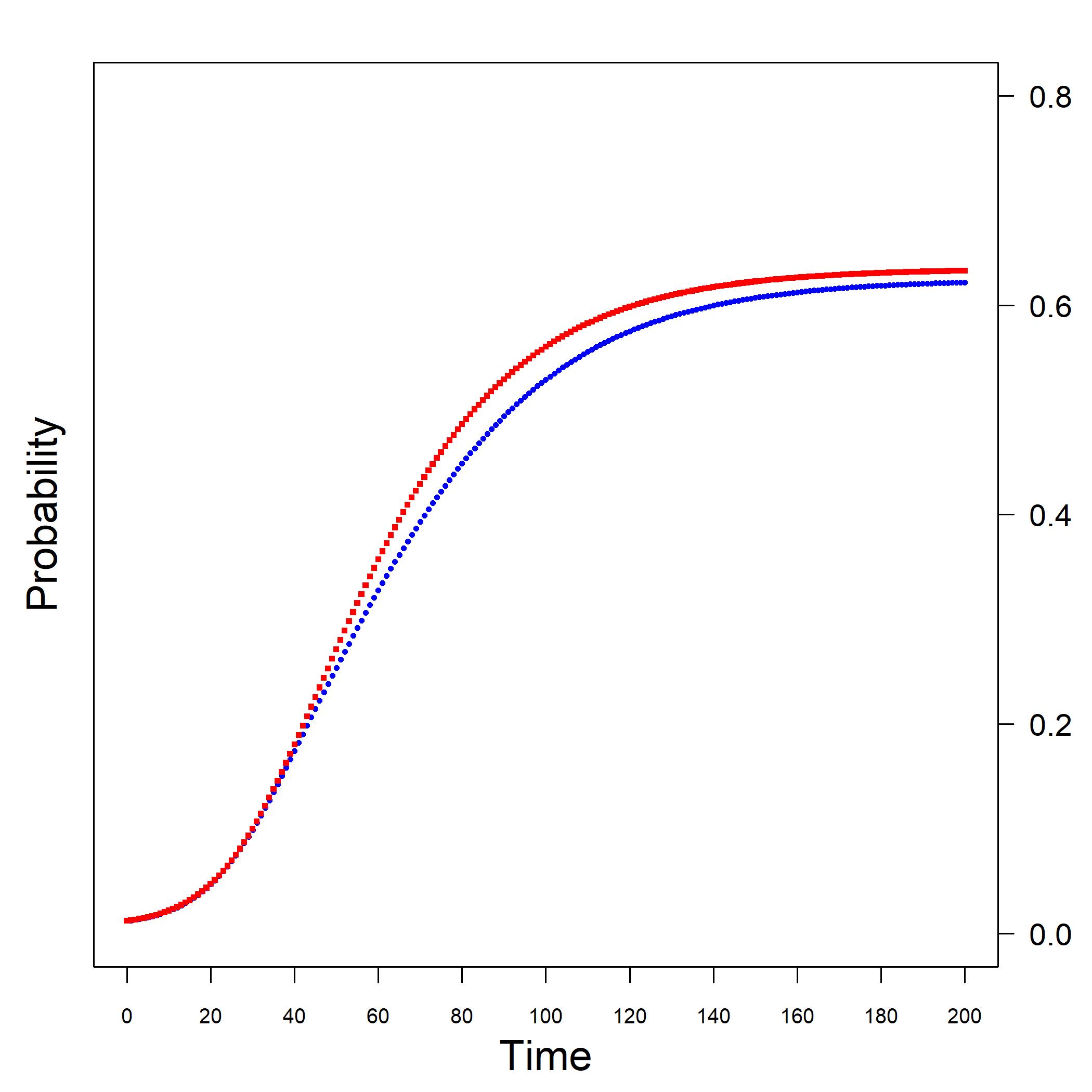}
		
	}\subfloat[]{\includegraphics[width=0.45\textwidth]{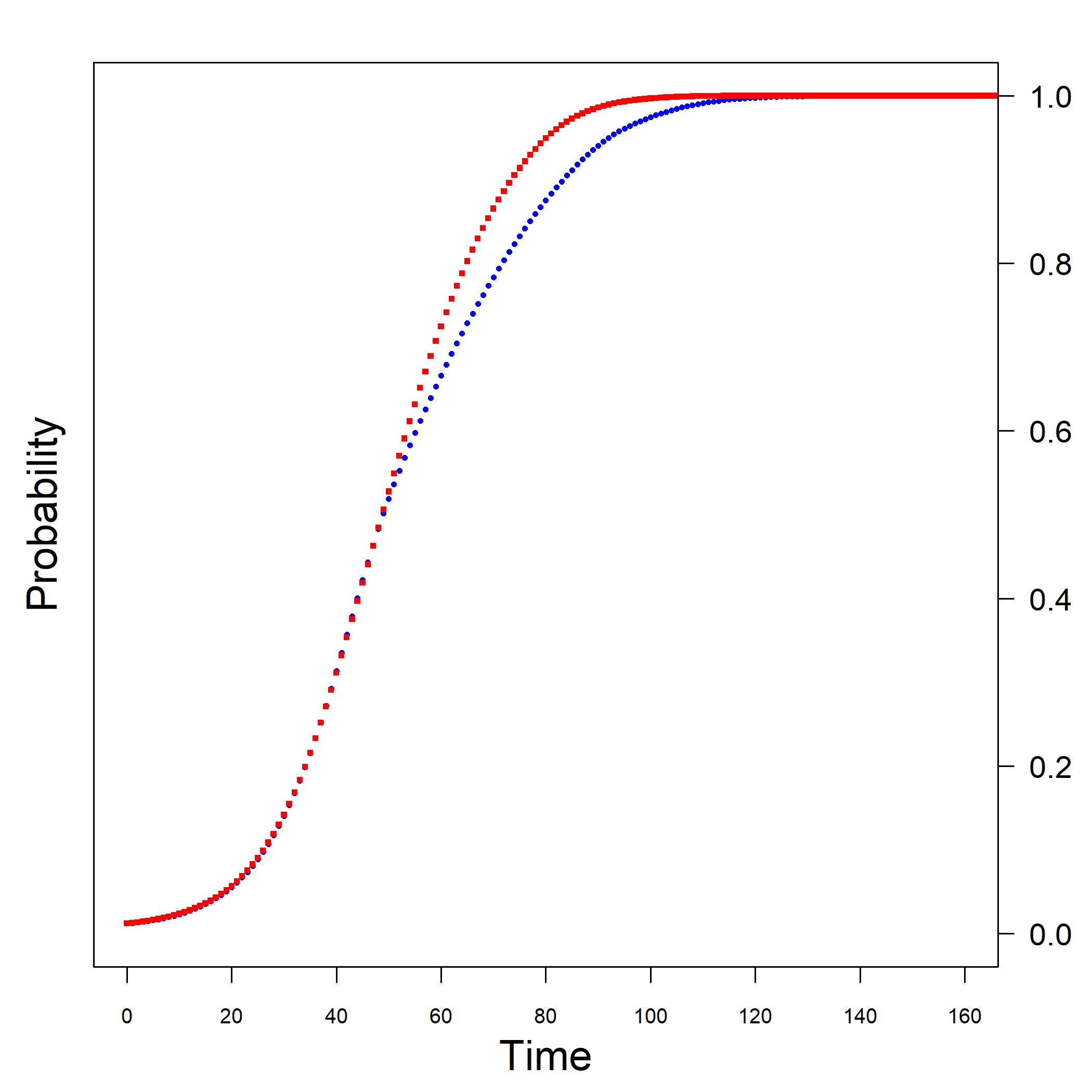}
		
	}
	
	\subfloat[]{\includegraphics[width=0.45\textwidth]{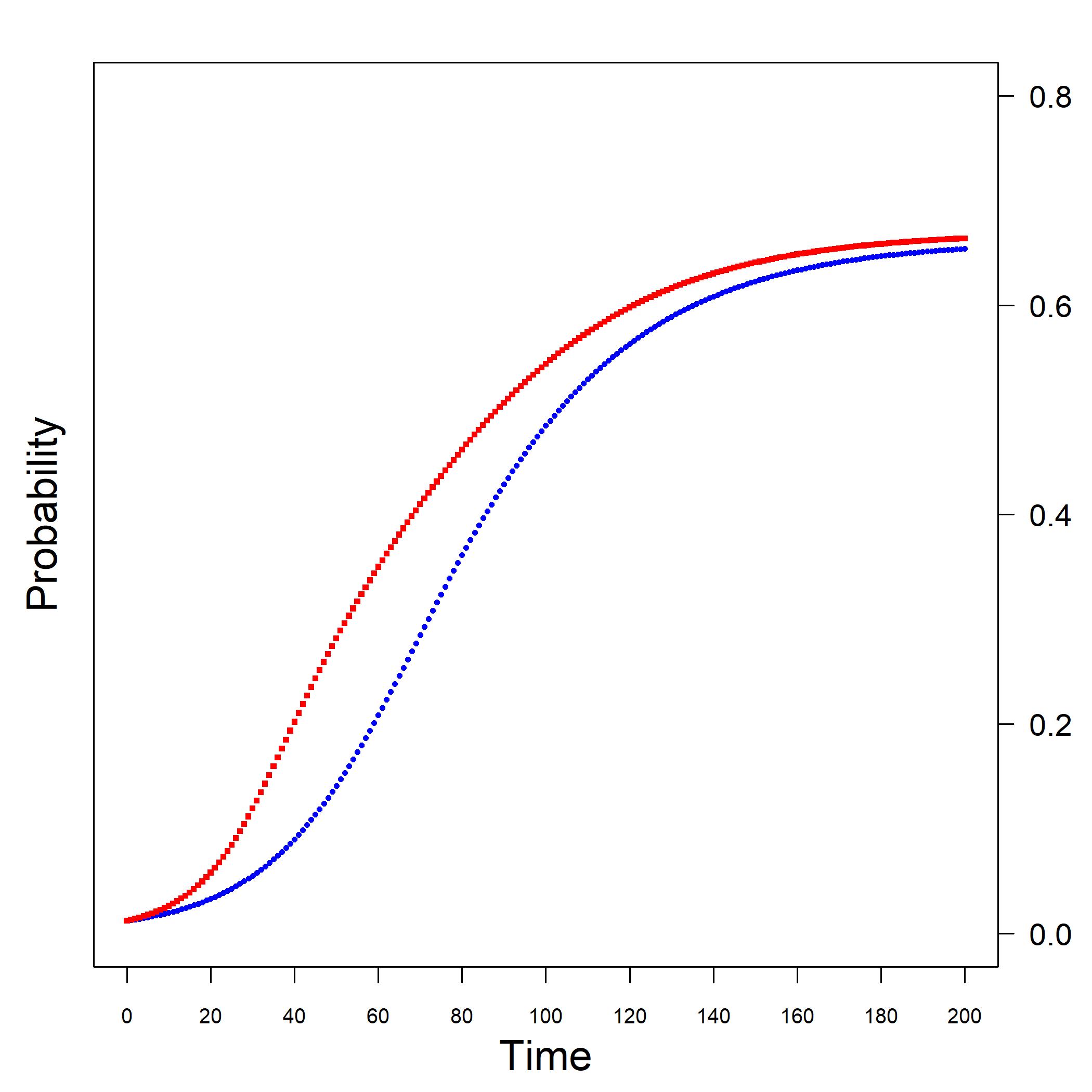}
		
	}\subfloat[]{\includegraphics[width=0.45\textwidth]{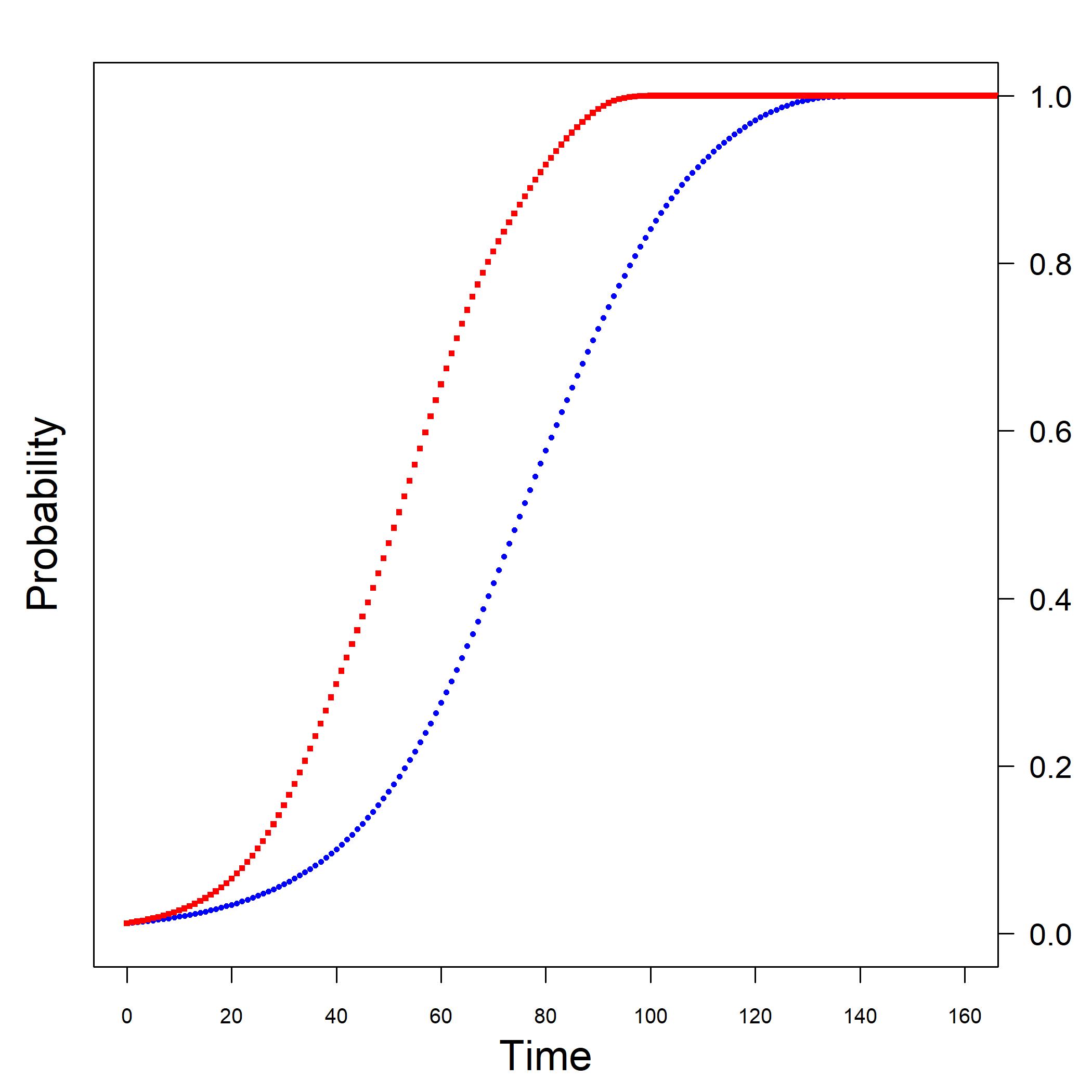}
		
	}

\caption{\textcolor{black}{Illustration of the time evolution of the cumulative
probability of getting infected obtained for the networks of hetero-
and homosexual contacts using the SIS model (a) and the Gompertz-like
solution obtained in this work (b). In panels (a) and (b) red circular
dots curve refers to the NHoC and blue square dots curve
to the NHeC. The time evolution of the cumulative
probability of two branches of the NHeC is obtained with the SIS model (c) and the Gompertz-like solution
obtained in this work (d). In panels (c) and (d) red circular dots
curve refers to branch 1 and blue square dots curve to branch 2 (see
the text for the involved nodes).}}

\label{Time_evolution} 
\end{figure}

\subsection{Temporal analysis of the probability density }

We now turn our attention to the time evolution of the population
density of infected individuals. As can be seen in Fig. \ref{sex}
(panel (a)), according to SIS, the growing period is practically the
same in both networks, with the NHoC reaching
a higher peak than the NHeC. Both networks reach their
maximum number of infected individuals almost at the same time. The
number of infected individuals per unit time decays slightly faster
in the NHoC than in the NHeC,
and both curves finally reach the zero percent of infected individuals
at the same time.

In the case of the Gompertz-like model (panel (b)) the differences
are more marked between the time evolution of the infection in both
networks. Here again the infection has the same growing process for
both networks, but from this time the differences of the dynamics
in the two networks are more significant. From $t=40$ to $t=75$
the infection decays faster in the NHeC. From that
time on, the decay is faster in the NHoC. Another important
characteristic observed in panel (b) is that while the whole decay
in the NHoC is smooth, it is irregular for the case
of the NHeC.

In order to further investigate the non smooth decay of the probability
density of infected individuals in the network of heterogeneous sexual
contacts we perform the same analysis by separating the nodes according
to the sex of the corresponding individuals. As can be seen in Fig.
\ref{sex} (panel (c) and (d)) both approaches (SIS and Gompertz, respectively)
predict a faster growing process for females than for males, with
a more pronounced difference in the Gompertz approach. However, while
the SIS approach predicts smooth decays of the number of infected
males and females, the Gompertz approach predicts an irregular decay
for males with two shoulders at different times, and a clearly non-monotonic
decay for females where a second peak is observed at about $t=80$.
%
%
%
%
%
%

\begin{figure}[h]
	\subfloat[]{\includegraphics[width=0.45\textwidth]{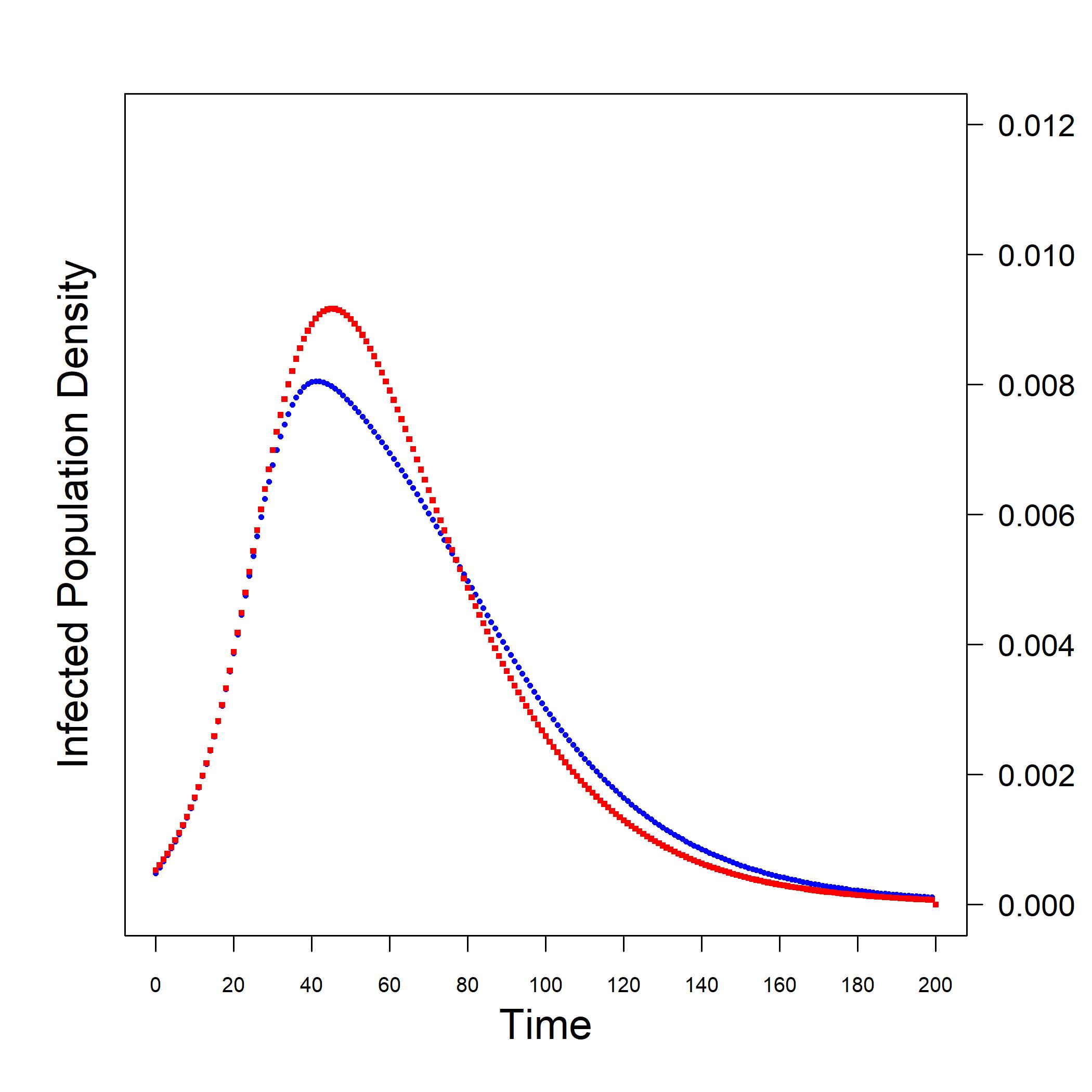}
		
	}\subfloat[]{\includegraphics[width=0.45\textwidth]{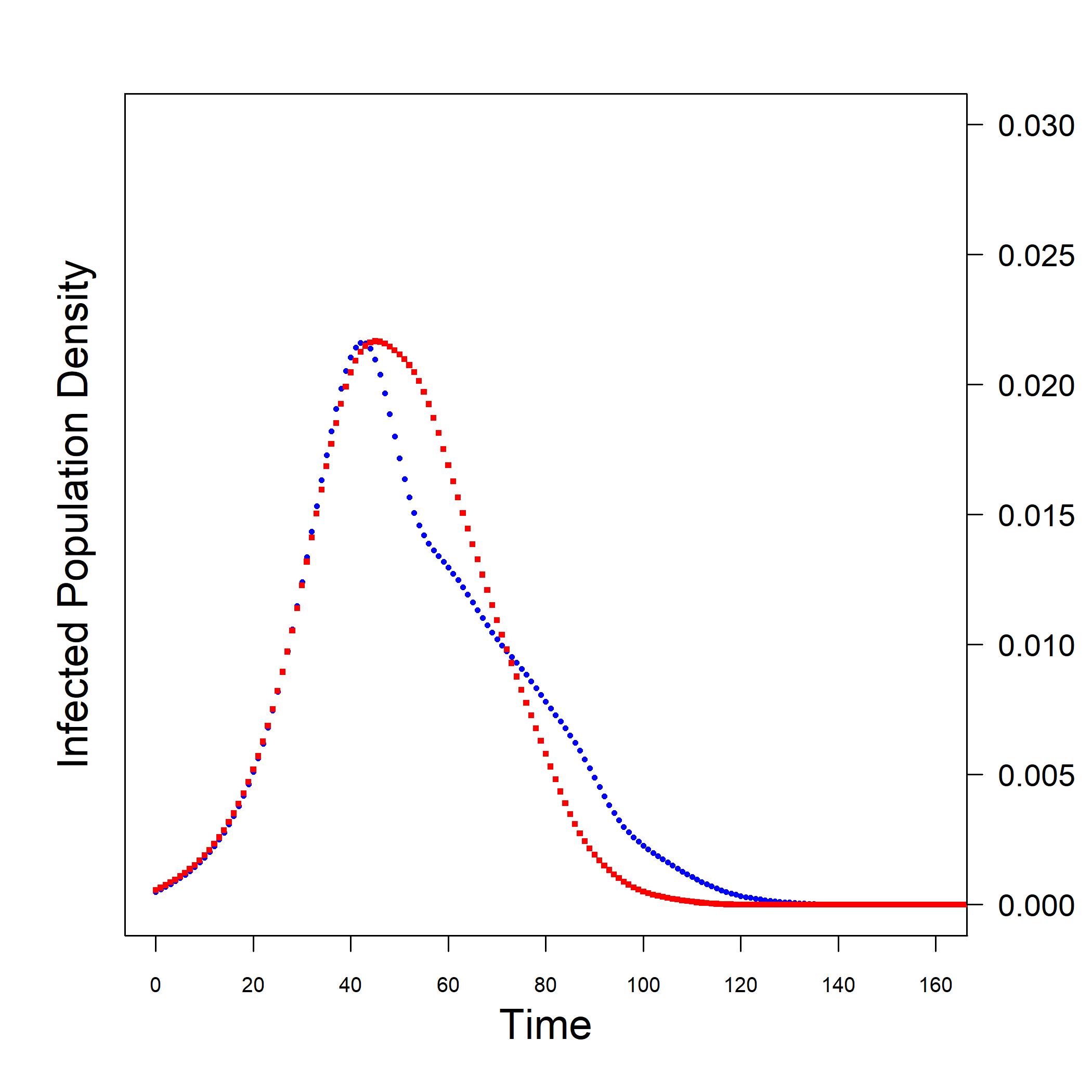}
		
	}
	
	\subfloat[]{\includegraphics[width=0.45\textwidth]{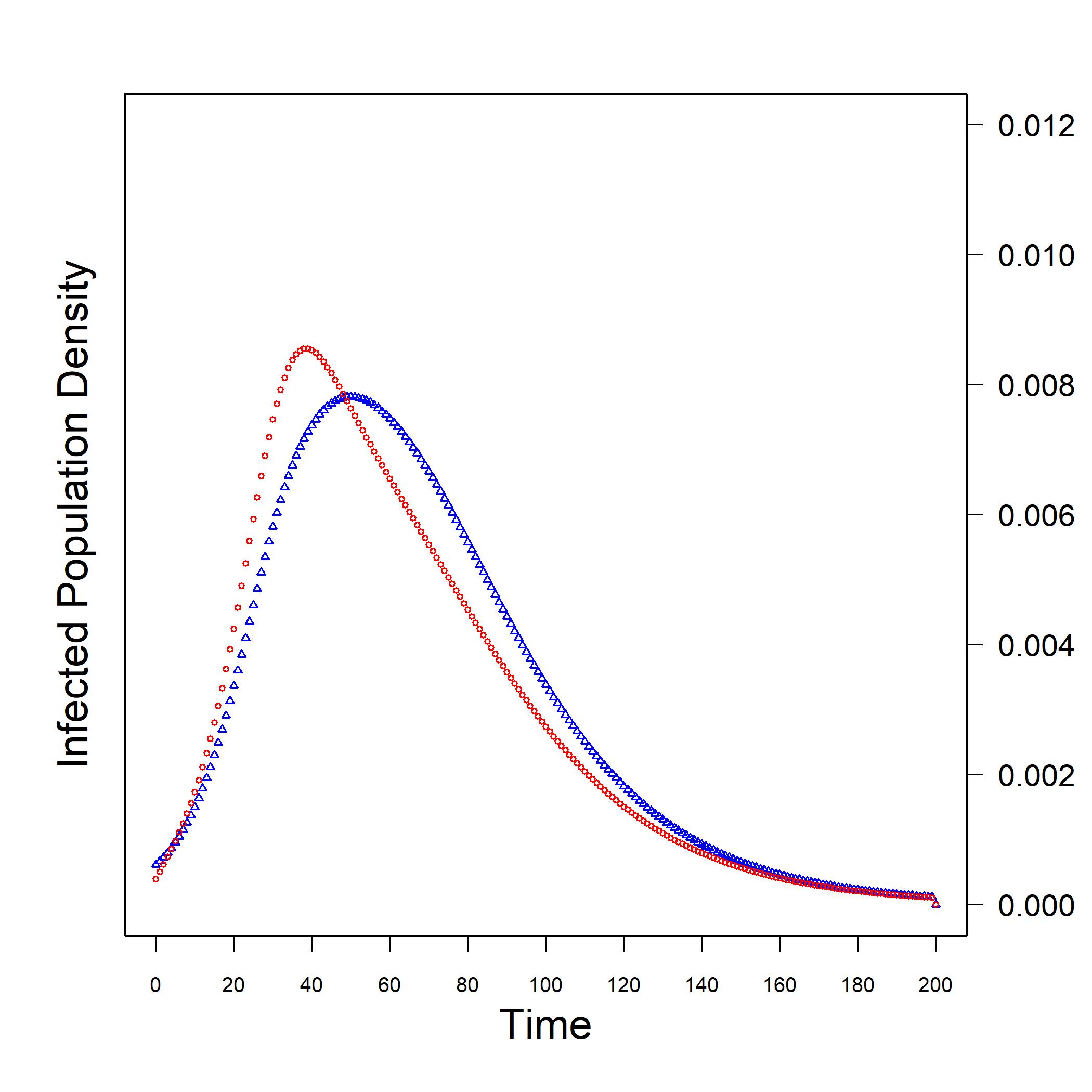}
		
	}\subfloat[]{\includegraphics[width=0.45\textwidth]{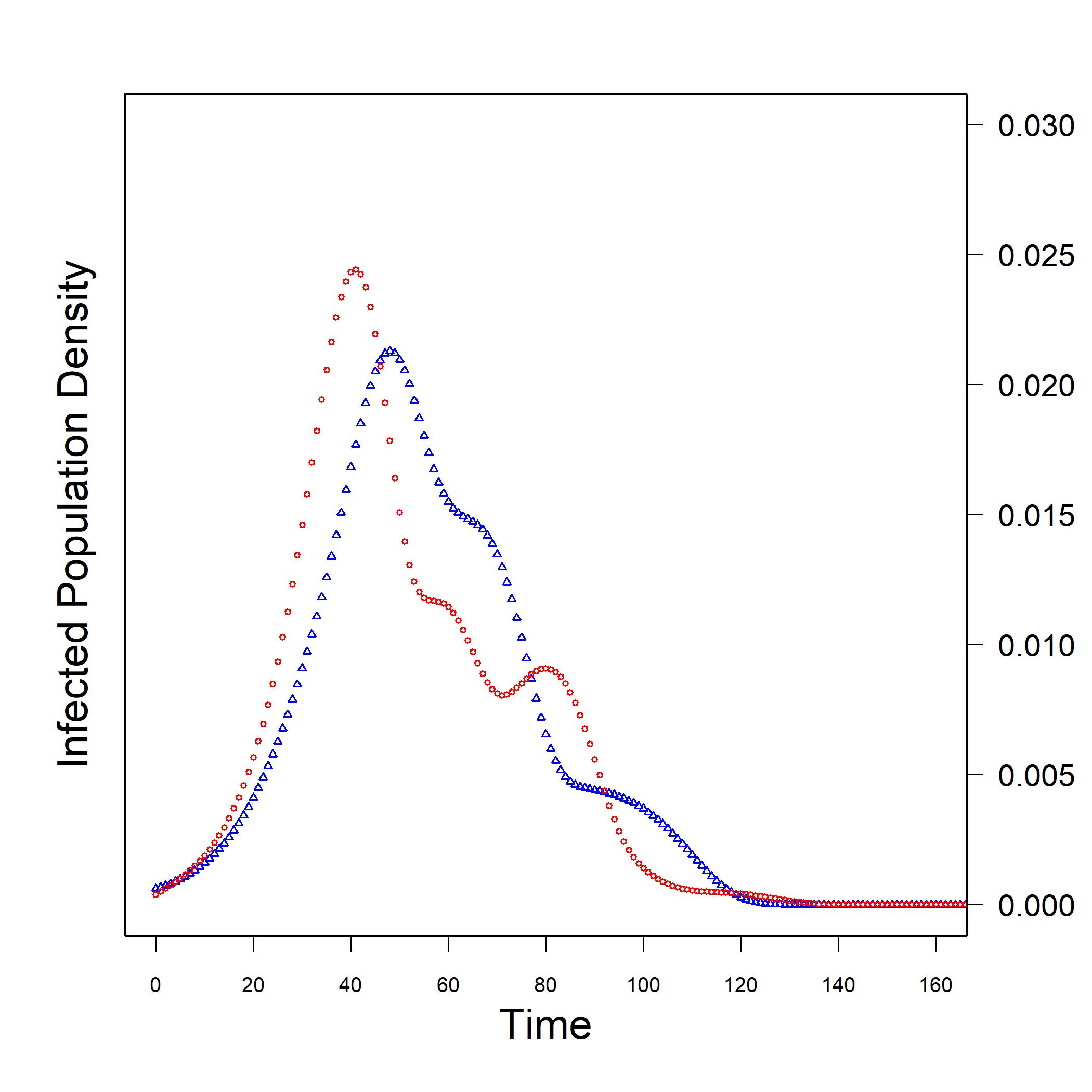}
		
	}

\caption{\textcolor{black}{Illustration of the time evolution of the probability
density of getting infected obtained for the networks of hetero- and
homosexual contacts using the SIS model (a) and the Gompertz-like
solution obtained in this work (b). In panels (a) and (b) red circular
dots curve refers to the NHoC and blue square dots curve
to the NHeC. The time evolution of the probability
density of males and females in the NHeC is
obtained with the SIS model (c) and the Gompertz-like solution obtained
in this work (d). In panels (c) and (d) red circles curve refers to
the female component and blue triangles curve to the male component.}}

\label{sex} 
\end{figure}

In the literature appear several reasons for this kind of ``multiple
peaks'' behavior in the dynamics of diseases. In the review paper
\citep{weiss2013sir} the author asks the question: ``What is the
mechanism(s) that is generating the multiple peaks? Nobody knows''.
In the context of Susceptible-Infected-Recovered (SIR) models the
authors of \citep{mummert2013perspective} proposed several alternatives,
two of which are applicable to the case studied here: (i) changes
in pathogen transmissibility and behavioral changes; (ii) population
heterogeneity where each wave spreads through one sub-population.
In another paper \citep{gupta1989networks} (see also \citep{anderson1991discussion}),
the authors reported that highly assortative mixing of the contacts
tends to lead to more rapid epidemic growth and can produce multiple
peaks in disease incidence. An explanation for the influence of assortative
mixing and multiple peaks is provided in \citep{hertog2007heterosexual}
by considering that the pathogen tends to move gradually from one
sexual activity group to the next, where the multiple peaks then correspond
to emerging infection within-group epidemics. A similar finding is
reported in \citep{xu2014long}. On the other hand, in \citep{perra2011towards}
the authors are able to generate multiple peaks of the infection by
tuning the parameters of a multi-compartment model.

We investigate here the causes of the multiple peaks observed when
using the networked Gompertz model. First we notice that the NHeC consists of two main blocks which corresponds
to individuals living in northern Manitoba and in Winnipeg. We then
select nodes representing female individuals from both blocks, namely
F10, F37, F1 and F4 from the block of Northern Manitoba (left in Fig. \ref{Networks} (a))
and F43 and F45 from the block of Winnipeg (right in Fig. \ref{Networks} (a)). In Fig. \ref{females evolution} we illustrate the results of the progression of the infection through these nodes. We have initiated the process by considering that every
node has exactly the same probability of getting infected, i.e. $p=1/82$.
However, as ``all roads lead to Rome'', node M9, which has the highest
degree of the network (degree 21), has a high risk of contagion at
earlier times of the propagation of the infection. Then, it is very
clear in the figure that among the nodes selected here, node F10,
which is connected to M9 and also has a high degree (degree
11), is the first to reach the peak of infection. Then, it comes F37,
which is a node separated by only three steps from M9. The process
continues by infecting nodes F1 and F4, which are five steps away
from M9, with the difference that the infection has two routes to
infect F1 (there is a square: F10-M4-F2-M5-F10) and only one to infect
F4. Then, the infection jumps to the second block and infects node
F43 separated from M9 by seven steps and then F45 separated by nine
steps from M9.


\begin{figure}[h]
	\begin{centering}
		\includegraphics[width=0.75\textwidth]{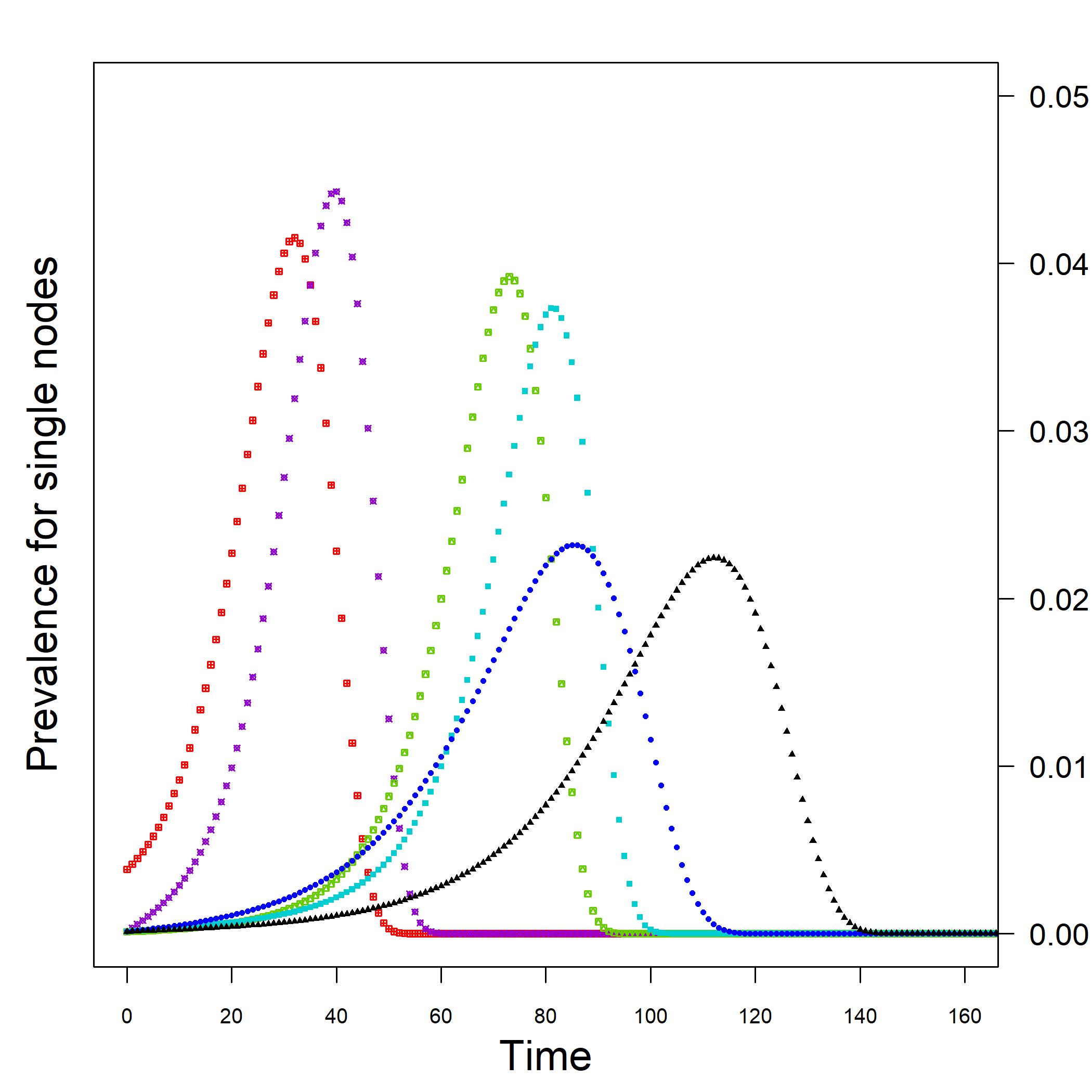} 
		\par\end{centering}
	\caption{\textcolor{black}{Illustration of the time evolution of the probability
			density of getting infected obtained for several nodes representing
			female individuals located at different ``regions'' of the network
			of heterosexual contacts (NHeC) using the Gompertz-like solution obtained
			in this work. From left to right: red F10; violet F37; green F1; cyan
			F4; blue F43; black F45.}}
\label{females evolution} 
\end{figure}

This result indicates that the networked Gompertz model obtained here
captures the wavefront nature of the propagation of a disease on a
network when more than one block or cluster exist. As we have seen
before this important feature of epidemic propagation is not captured
by the exact solution of the SIS model.
Multipeaks like the ones reproduced by the current model are observed in several real-life epidemics, ranging from the dengue fever outbreak in Havana (see Fig. 6 in \citep{weiss2013sir}) to the swine fever virus outbreaks in the Netherlands during 1997-1998 (see Fig. 7 in \citep{weiss2013sir} and \citep{Stegeman1999}).
Additionally, as the NHeC studied here is degree disassortative, our results show that the condition of assortativity is not necessary for the existence of these multipeaks as previously claimed in the literature \citep{gupta1989networks}.

\section{Conclusions}

It is known that the Gompertz function has a remarkable effectiveness in fitting experimental epidemiological and biological data, including the worldwide spread of COVID-19.  Here we approach the problem of combining this specific function with the information about the structure of real networks hosting a contagion process. 
To the best of our knowledge, a Gompertz function
on networks has not been proposed in the literature so far. This function
should take into account the topological structure of the network
and at the same time reduce asymptotically to the scalar Gompertz
function for sufficiently large times. The present paper fills this
gap, presenting a deduction of a Gompertz function on networks starting
from a classical contagion model such as the SIS model, which, as
is well known, is characterized by a symmetrical behavior with respect
to the inflection point. The search for an upper bound for the exact
solution to the SIS model on network has led to the identification of a new class of functions that
lack such symmetry but instead exhibit a typical Gompertz behavior.
We test this function on networks of sexual contacts that have already
been analyzed in the literature as the site of transmission for various
infectious diseases, which contemplate the possibility that a recovered
individual could become infected again. The main results can be summarized
as follows: 1) the function deduced as a worst-case scenario of the SIS model describes
effectively the behavior of the exact solution simulated on the network
under examination; 2) it allows to grasp the different behavior of
separate clusters in the network, such as the two components in a
bipartite network or specific subsets of nodes; 3) it allows to amplify
and therefore better highlight some behaviors reported in the literature
in the number of daily cases, such as the existence of multiple peaks
of subsequent infections, or the spread of infection from the core
towards the periphery of the network itself. Just as the scalar Gompertz
curve finds application in the description of an ever increasing number
of growth processes, we are confident that the function proposed here
could help in the description of similar processes hosted on specific
networks.

\section*{Acknowledgement}

E.E. thanks Grant PID2019-107603GB-I00 by MCIN/AEI/10.13039/501100011033.

 \bibliographystyle{elsarticle-num}
\bibliography{ref}

\begin{thebibliography}{10}
\expandafter\ifx\csname url\endcsname\relax
  \def\url#1{\texttt{#1}}\fi
\expandafter\ifx\csname urlprefix\endcsname\relax\def\urlprefix{URL }\fi
\expandafter\ifx\csname href\endcsname\relax
  \def\href#1#2{#2} \def\path#1{#1}\fi

\bibitem{gompertz1825xxiv}
B.~Gompertz, {XXIV}. {On} the nature of the function expressive of the law of
  human mortality, and on a new mode of determining the value of life
  contingencies. in a letter to {Francis Baily}, {Esq. F.R.S.}\&c,
  Philosophical transactions of the Royal Society of London~(115) (1825)
  513--583.

\bibitem{oshima2020modified}
K.~Oshima, I.~Hofuku, Modified {Gompertz} curve and its applications {I}, in:
  Proceedings of the Sixth International Colloquium on Differential Equations,
  De Gruyter, 2020, pp. 181--188.

\bibitem{finch1996maximum}
C.~E. Finch, M.~C. Pike, Maximum life span predictions from the {Gompertz}
  mortality model, The Journals of Gerontology Series A: Biological Sciences
  and Medical Sciences 51~(3) (1996) B183--B194.

\bibitem{tumor_1}
D.~Yang, P.~Gao, C.~Tian, Y.~Sheng, Gompertz tracking of the growth
  trajectories of the human-liver-cancer xenograft-tumors in nude mice,
  Computer methods and programs in biomedicine 191 (2020) 105412.

\bibitem{tumor_2}
L.~E.~B. Cabrales, J.~I. Montijano, M.~Schonbek, A.~R.~S. Casta{\~n}eda, A
  viscous modified {Gompertz} model for the analysis of the kinetics of tumors
  under electrochemical therapy, Mathematics and Computers in Simulation 151
  (2018) 96--110.

\bibitem{tumor_3}
A.~R.~S. Casta{\~n}eda, E.~R. Torres, N.~A.~V. Goris, M.~M. Gonz{\'a}lez, J.~B.
  Reyes, V.~G.~S. Gonz{\'a}lez, M.~Schonbek, J.~I. Montijano, L.~E.~B.
  Cabrales, New formulation of the {Gompertz} equation to describe the kinetics
  of untreated tumors, PloS one 14~(11) (2019) e0224978.

\bibitem{frenzen1986cell}
C.~Frenzen, J.~Murray, A cell kinetics justification for {Gompertz} equation,
  SIAM Journal on Applied Mathematics 46~(4) (1986) 614--629.

\bibitem{karin2019senescent}
O.~Karin, A.~Agrawal, Z.~Porat, V.~Krizhanovsky, U.~Alon, Senescent cell
  turnover slows with age providing an explanation for the {Gompertz} law,
  Nature communications 10~(1) (2019) 1--9.

\bibitem{gutierrez2007new}
R.~Gutierrez-Jaimez, P.~Rom{\'a}n, D.~Romero, J.~J. Serrano, F.~Torres, A new
  {Gompertz}-type diffusion process with application to random growth,
  Mathematical Biosciences 208~(1) (2007) 147--165.

\bibitem{li2018dynamic}
Y.~Li, H.~Cheng, J.~Wang, Y.~Wang, Dynamic analysis of unilateral diffusion
  {Gompertz} model with impulsive control strategy, Advances in Difference
  Equations 2018~(1) (2018) 1--14.

\bibitem{ramirez2021new}
E.~E. Ramirez-Torres, A.~R.~S. Castaneda, L.~Randez, L.~E.~V. Garc{\'\i}a,
  L.~E.~B. Cabrales, S.~A. Sisson, J.~I. Montijano, A new model of unreported
  {COVID-19} cases outperforms three known epidemic-growth models in describing
  data from {Cuba and Spain}, medRxiv.

\bibitem{conde2021comparison}
R.~Conde-Guti{\'e}rrez, D.~Colorado, S.~Hern{\'a}ndez-Bautista, Comparison of
  an artificial neural network and {Gompertz} model for predicting the dynamics
  of deaths from {COVID-19} in {M{\'e}xico}, Nonlinear Dynamics (2021) 1--15.

\bibitem{berihuete2021bayesian}
{\'A}.~Berihuete, M.~S{\'a}nchez-S{\'a}nchez, A.~Su{\'a}rez-Llorens, A bayesian
  model of {COVID-19} cases based on the {Gompertz} curve, Mathematics 9~(3)
  (2021) 228.

\bibitem{valle2020predicting}
J.~A.~M. Valle, Predicting the number of total {COVID-19} cases and deaths in
  {Brazil} by the {Gompertz} model, Nonlinear Dynamics 102~(4) (2020)
  2951--2957.

\bibitem{ohnishi2020universality}
A.~Ohnishi, Y.~Namekawa, T.~Fukui, Universality in {COVID-19} spread in view of
  the {Gompertz} function, Progress of Theoretical and Experimental Physics
  2020~(12) (2020) 123J01.

\bibitem{comparison_1}
A.~Nobile, L.~Ricciardi, L.~Sacerdote, On {Gompertz} growth model and related
  difference equations, Biological Cybernetics 42~(3) (1982) 221--229.

\bibitem{comparison_2}
T.~Grozdanovski, J.~Shepherd, Slow variation in the {Gompertz} model, ANZIAM
  Journal 47 (2005) C541--C554.

\bibitem{comparison_3}
R.~Berger, Comparison of the {Gompertz} and logistic equations to describe
  plant disease progress., Phytopathology 71~(7) (1981) 716--719.

\bibitem{comparison_4}
M.~Dhar, P.~Bhattacharya, Comparison of the logistic and the {Gompertz} curve
  under different constraints, Journal of Statistics and Management Systems
  21~(7) (2018) 1189--1210.

\bibitem{achterberg2020comparing}
M.~A. Achterberg, B.~Prasse, L.~Ma, S.~Trajanovski, M.~Kitsak, P.~Van~Mieghem,
  Comparing the accuracy of several network-based {COVID-19} prediction
  algorithms, International journal of forecasting.

\bibitem{Hui2021}
H.-J. Li, W.~Xu, S.~Song, W.-X. Wang, M.~Perc,
  \href{https://www.sciencedirect.com/science/article/pii/S0960077921006482}{The
  dynamics of epidemic spreading on signed networks}, Chaos, Solitons \&
  Fractals 151 (2021) 111294.
\newblock \href {http://dx.doi.org/https://doi.org/10.1016/j.chaos.2021.111294}
  {\path{doi:https://doi.org/10.1016/j.chaos.2021.111294}}.
\newline\urlprefix\url{https://www.sciencedirect.com/science/article/pii/S0960077921006482}

\bibitem{lee2019transient}
C.-H. Lee, S.~Tenneti, D.~Y. Eun, Transient dynamics of epidemic spreading and
  its mitigation on large networks (2019).
\newblock \href {http://arxiv.org/abs/1903.00167} {\path{arXiv:1903.00167}}.

\bibitem{lee2019transientProceedings}
C.-H. Lee, S.~Tenneti, D.~Y. Eun, Transient dynamics of epidemic spreading and
  its mitigation on large networks, in: Proceedings of the twentieth ACM
  international symposium on mobile ad hoc networking and computing, 2019, pp.
  191--200.

\bibitem{katz1953}
L.~Katz, A new status index derived from sociometric analysis, Psychometrika
  18~(1) (1953) 39--43.

\bibitem{Estrada2010SIAM}
E.~Estrada, D.~J. Higham, Network properties revealed through matrix functions,
  SIAM Rev. 52~(4) (2010) 696--714.

\bibitem{juher2017}
D.~Juher, J.~Salda{\~n}a, R.~Kohn, K.~Bernstein, C.~Scoglio, Network-centric
  interventions to contain the syphilis epidemic in {San Francisco}, Scientific
  reports 7.
\newblock \href {http://dx.doi.org/10.1038/s41598-017-06619-9}
  {\path{doi:10.1038/s41598-017-06619-9}}.

\bibitem{zhao2019}
S.~Zhao, S.~S. Musa, H.~Fu, D.~He, J.~Qin, Simple framework for real-time
  forecast in a data-limited situation: the {Zika} virus {(ZIKV)} outbreaks in
  {Brazil} from 2015 to 2016 as an example, Parasites \& Vectors 12~(1) (2019)
  344--357.
\newblock \href {http://dx.doi.org/10.1186/s13071-019-3602-9}
  {\path{doi:10.1186/s13071-019-3602-9}}.

\bibitem{wylie2001patterns}
J.~L. Wylie, A.~Jolly, Patterns of chlamydia and gonorrhea infection in sexual
  networks in {Manitoba}, {Canada}, Sexually transmitted diseases 28~(1) (2001)
  14--24.

\bibitem{potterat2002risk}
J.~J. Potterat, L.~Phillips-Plummer, S.~Q. Muth, R.~Rothenberg, D.~Woodhouse,
  T.~Maldonado-Long, H.~Zimmerman, J.~Muth, Risk network structure in the early
  epidemic phase of {HIV} transmission in {Colorado Springs}, Sexually
  transmitted infections 78~(suppl 1) (2002) i159--i163.

\bibitem{estrada2012structure}
E.~Estrada, The structure of complex networks: theory and applications, Oxford
  University Press, 2012.

\bibitem{watts1998collective}
D.~J. Watts, S.~H. Strogatz, Collective dynamics of 'small-world' networks,
  nature 393~(6684) (1998) 440--442.

\bibitem{estrada2010quantifying}
E.~Estrada, Quantifying network heterogeneity, Physical Review E 82~(6) (2010)
  066102.

\bibitem{estrada2019degree}
E.~Estrada, Degree heterogeneity of graphs and networks. {I. Interpretation}
  and the "heterogeneity paradox", Journal of Interdisciplinary Mathematics
  22~(4) (2019) 503--529.

\bibitem{stevanovic2014spectral}
D.~Stevanovic, Spectral radius of graphs, Academic Press, 2014.

\bibitem{newman2003mixing}
M.~E. Newman, Mixing patterns in networks, Physical review E 67~(2) (2003)
  026126.

\bibitem{eames2002}
K.~T.~D. Eames, M.~J. Keeling, Modeling dynamic and network heterogeneities in
  the spread of sexually transmitted diseases, Proceedings of the National
  Academy of Sciences 99~(20) (2002) 13330--13335.
\newblock \href {http://dx.doi.org/10.1073/pnas.202244299}
  {\path{doi:10.1073/pnas.202244299}}.

\bibitem{keeling2005}
M.~J. Keeling, K.~T. Eames, Networks and epidemic models, Journal of the Royal
  Society Interface 2~(4) (2005) 295--307.

\bibitem{weiss2013sir}
H.~H. Weiss, The {SIR} model and the foundations of public health, Materials
  matematics (2013) 0001--17.

\bibitem{mummert2013perspective}
A.~Mummert, H.~Weiss, L.-P. Long, J.~M. Amig{\'o}, X.-F. Wan, A perspective on
  multiple waves of influenza pandemics, PloS one 8~(4) (2013) e60343.

\bibitem{gupta1989networks}
S.~Gupta, R.~M. Anderson, R.~M. May, Networks of sexual contacts: implications
  for the pattern of spread of {HIV}, AIDS (London, England) 3~(12) (1989)
  807--817.

\bibitem{anderson1991discussion}
R.~M. Anderson, Discussion: the {Kermack-McKendrick} epidemic threshold
  theorem, Bulletin of mathematical biology 53~(1) (1991) 1--32.

\bibitem{hertog2007heterosexual}
S.~Hertog, Heterosexual behavior patterns and the spread of {HIV/AIDS}: the
  interacting effects of rate of partner change and sexual mixing, Sexually
  transmitted diseases 34~(10) (2007) 820--828.

\bibitem{xu2014long}
Z.~Xu, Z.~Zu, T.~Zheng, W.~Zhang, Q.~Xu, J.~Liu, Long-distance travel
  behaviours accelerate and aggravate the large-scale spatial spreading of
  infectious diseases, Computational and mathematical methods in medicine 2014.

\bibitem{perra2011towards}
N.~Perra, D.~Balcan, B.~Gon{\c{c}}alves, A.~Vespignani, Towards a
  characterization of behavior-disease models, PloS one 6~(8) (2011) e23084.

\bibitem{Stegeman1999}
A.~Stegeman, A.~Elbers, J.~Smak, M.~de~Jong, Quantification of the transmission
  of classical swine fever virus between herds during the 1997-1998 epidemic in
  {The Netherlands}, Preventive veterinary medicine 42~(3-4) (1999) 219--234.
\newblock \href {http://dx.doi.org/10.1016/s0167-5877(99)00077-x}
  {\path{doi:10.1016/s0167-5877(99)00077-x}}.

\end{thebibliography}

\end{document}